\date{August 29, 2011}
\renewcommand{\mlabel}{\label}
\begin{document} 



\title{Oscillator algebras with semi-equicontinuous coadjoint 
orbits} 
\author{Karl-Hermann Neeb, Christoph Zellner
\begin{footnote}{Supported by DFG-grant NE 413/7-1, Schwerpunktprogramm 
``Darstellungstheorie''.}\end{footnote} 
}

\maketitle
\date 

\begin{abstract} A unitary representation 
of a, possibly infinite dimensional, Lie group 
$G$ is called semi-bounded if the corresponding 
operators $i\dd\pi(x)$ from the derived representations 
are uniformly bounded from above on some non-empty open subset 
of the Lie algebra $\g$. Not every Lie group has non-trivial semibounded unitary 
representations, so that it becomes an important issue to decide 
when this is the case. In the present paper we describe a complete 
solution of this problem for the class of generalized oscillator 
groups, which are semidirect products of Heisenberg groups with a 
one-parameter group~$\gamma$. For these groups it turns out that the existence 
of non-trivial semibounded representations is equivalent to the existence 
of so-called semi-equicontinuous non-trivial coadjoint orbits, a purely 
geometric condition on the coadjoint action. This in turn can be 
expressed by a positivity condition on the Hamiltonian function 
corresponding to the infinitesimal generator $D$ of~$\gamma$.
A central point of our investigations is that we make no assumption 
on the structure of the spectrum of~$D$. In particular, $D$ can be any 
skew-adjoint operator on a Hilbert space. 

{\em MSC2000:} 22E45, 22E65. 
\end{abstract} 

\tableofcontents 

\section*{Introduction}  \label{sec:0}

This paper is part of a project concerned with a 
systematic approach to unitary representations 
of infinite dimensional Lie groups in terms of conditions 
on spectra in the derived representations. For the derived 
representation to carry significant information, we first have to 
impose a suitable smoothness condition. A unitary 
representation $\pi \: G \to \U(\cH)$ 
is said to be {\it smooth} if the subspace 
$\cH^\infty$ of smooth vectors is dense. 
This is automatic for continuous 
representations of finite-dimensional groups. For any smooth 
unitary representation, the {\it derived representation} 
\[ \dd\pi \: \g = \L(G)\to \End(\cH^\infty), \quad 
\dd\pi(x)v := \derat0 \pi(\exp tx)v\]  
carries significant information in the sense that the closure of the 
operator $\dd\pi(x)$ coincides with the infinitesimal generator of the 
unitary one-parameter group $\pi(\exp tx)$. We call $(\pi, \cH)$ 
{\it semibounded} if the function 
\[ s_\pi \: \g \to \R \cup \{ \infty\}, \quad 
s_\pi(x) 
:= \sup\big(\Spec(i\dd\pi(x))\big) \]
is bounded on a neighborhood of some point in $\g$.  
All finite dimensional continuous unitary representations are bounded 
and most of the unitary representations appearing in physics are semibounded 
(\cite{BSZ92}, \cite{Bo96}, \cite{Ne10a}, \cite{Se67}, \cite{SeG81}, \cite{FH05}, \cite{GJ}). 
Therefore it is an important structural problem in infinite dimensional 
Lie theory to understand when a given Lie group $G$ 
has non-trivial semibounded unitary representations. In the 
present paper we obtain a complete answer to this problem for the 
class of {\it oscillator groups}.  Before we take a closer look 
at these groups, we describe our approach to this problem. 

A central idea is that, for a smooth unitary representation 
$(\pi, \cH)$, the function 
$s_\pi$ can be written as 
\[ s_\pi(x) 
= \sup \la I_\pi, -x \ra = - \inf \la I_\pi, x \ra, \] 
where $I_\pi \subeq \g'$ ($\g'$ denotes the topological dual of $\g$), 
is a weak-$*$-closed convex subset, called the {\it momentum set of 
$\pi$}. It is defined as the weak-$*$-closed convex hull of the image of the 
momentum map on the projective space of $\cH^\infty$: 
$$ \Phi_\pi \: \bP({\cal H}^\infty)\to \g' \quad \hbox{ with } \quad 
\Phi_\pi([v])(x) 
= \frac{1}{i}  \frac{\la  \dd\pi(x).v, v \ra}{\la v, v \ra}\quad \mbox{ for } 
[v] = \C v. $$
We call a subset $E \subeq \g'$ {\it semi-equicontinuous} 
if its {\it support functional} $s_E(x) := \sup \la E, -x \ra$ 
is bounded on the neighborhood of some point in $\g$. 
In this sense, a smooth unitary representation 
$(\pi, \cH)$ is semibounded if and only if $I_\pi$ is 
semi-equicontinuous. Since the ideal $I_\pi^\bot \trile \g$ 
is the kernel of the derived representation, we are interested 
in those groups for which the semi-equicontinuous coadjoint orbits 
$\cO_\lambda = \Ad^*(G)\lambda \subeq \g'$ separate the points of $\g$. 
This is trivially the case if $G$ is connected and abelian, 
which implies that all coadjoint orbits are {\it trivial}, i.e., 
$\cO_\lambda = \{\lambda\}$. Since the semibounded representations 
of abelian Lie groups are well-understood in terms 
of a suitable spectral theorem (\cite[Thm.~4.1]{Ne09}), 
it is natural to ask for the ``simplest'' class of non-abelian 
groups with non-trivial semibounded representations. 

As a consequence of the elementary  considerations in Section~\ref{sec:1}, 
we show in Theorem~\ref{thm:1.3} that if $\g$ is either 
nilpotent or $2$-step solvable, then all semi-equicontinuous coadjoint 
orbits in $\g'$ are trivial. On the group level, this implies 
that all semibounded unitary representations factor through abelian 
quotient groups. Therefore we focus on $3$-step solvable 
Lie algebras. Among these algebras, the simplest and the most 
prominent class are the oscillator algebras. They are constructed 
as follows. 
Let $(V,\omega)$ be a locally convex symplectic space and 
$$ \Heis(V,\omega) := \R \oplus_\omega V $$
be the corresponding Heisenberg group with the multiplication 
$$ (z,v)(z',v') = (z + z' + \shalf \omega(v,v'), v + v'). $$
Further, 
let $\gamma \: \R \to \Sp(V,\omega)$ be a representation defining a smooth 
action of $\R$ on $V$ and $D := \gamma'(0)$ be its infinitesimal 
generator. Then 
$$ G := G(V,\omega,\gamma) := \Heis(V,\omega) \rtimes_\gamma \R $$
is a Lie group, called 
the corresponding (generalized) {\it oscillator group}. 
Its multiplication is given by 
$$ (z,v,t)(z',v',t') = \Big(z + z' + \frac{1}{2}\omega(v,\gamma(t)v'), 
v + \gamma(t)v', t + t'\Big)  $$
and its Lie algebra is 
$$ \g := \g(V,\omega,D) := \heis(V,\omega) \rtimes_D \R $$  
with the bracket 
$$ [(z,v,t), (z',v',t')]= (\omega(v,v'), tDv' - t'Dv,0). $$
The one-parameter group $\gamma$ represents on $V$ the flow 
of the Hamiltonian vector field corresponding to the 
function 
$H_D(v) := \shalf \omega(Dv,v).$

The first main result of the present paper 
is a characterization of those oscillator algebras 
with non-trivial semi-equicontinuous 
coadjoint orbits. The answer is surprisingly 
simple, namely that such orbits exist if and only if 
$\g(V,\omega, D)$ is isomorphic to a double extension 
of a euclidean locally convex space $(V,\kappa)$, defined by a 
skew-symmetric derivation~$D$ on~$V$  (Theorem~\ref{thm:4.2}).
 We also give a description of the set 
$\g'_{\rm seq}$ of semi-equicontinuous coadjoint orbits 
(Proposition~\ref{prop:char2}). In particular, we show that the 
existence of non-trivial semi-equi\-continuous 
coadjoint orbits implies that 
$H_D$ is either positive or negative. 

In Sections~\ref{sec:5} and \ref{sec:6} we get full circle by showing 
that, up to passing to a natural completion, resp., a naturality 
condition on the topology on $V$ in terms of $\kappa$ and $D$, 
the existence of semi-equicontinuous coadjoint orbits 
is for oscillator groups equivalent to the existence of 
non-trivial semibounded unitary representations. 
The investigation of the structure of the set of 
semibounded unitary representations of a given oscillator 
group is part of the second author's Ph.D.\ project (cf. \cite{Ze11}). 

The investigations in the present paper are prototypical for many 
other situations occurring in infinite dimensional Lie theory in relation to 
mathematical physics. There one often encounters groups of the from 
$G = \hat H \rtimes_\gamma \R$, where $\hat H$ is a central extension of a Lie 
group~$H$. For such groups the question arises if there are unitary 
representations $(\pi, \cH)$ satisfying the ``positive energy'' condition, 
also called the ``spectral condition'', that $-i\dd\pi(0,1)$ is bounded 
from below (\cite{Bo96}). 
Generalized oscillator groups are the simplest groups of this 
type, corresponding to the situation where $H = (V,+)$ is the additive 
group of a locally convex space. In this case the positive energy 
condition is equivalent to semiboundedness (cf.\ Lemma~\ref{lem:semiboundedcond}). 
The necessary conditions derived in the present paper apply 
in particular to $\gamma$-invariant abelian subgroups of $H$ so that they 
have an immediate impact on the general case. In view of the absence of 
any structure of the group $H = (V,+)$, they provide the context 
with the maximal freedom for the one-parameter groups~$\gamma$ so that 
the influence of the spectral properties of $\gamma$ on the representation 
theory of $G$ can be studied most directly.

\section{Triviality of semi-equicontinuous orbits} \mlabel{sec:1}

In this section we introduce the main geometric concept of this paper: 
semi-equicontinuous coadjoint orbits. 
The main result of this short section is Theorem~\ref{thm:1.3}, 
asserting that if $\g$ is nilpotent or $2$-step solvable, then 
all its semi-equicontinuous coadjoint orbits are trivial. 
Throughout $\g$ denotes the Lie algebra of a connected 
locally convex Lie group~$G$ to ensure the existence of a coadjoint 
action $\Ad^*(g)\alpha := \alpha \circ \Ad(g)^{-1}$.

\begin{defn} We call a subset $E \subeq \g'$ {\it semi-equicontinuous} 
if its {\it support functional} $s_E(x) := \sup \la E, -x \ra$ 
is bounded on the neighborhood of some point in $\g$. 
This implies in particular, that the associated convex cone 
\[ B(E) := \{ x \in \g \: \inf \la E, x \ra > - \infty \}  
= \{ x \in \g \: s_E(x) < \infty \} \]  
has interior points. 
\end{defn}

\begin{rem} \mlabel{rem:2.2} 
(a) If $E$ is semi-equicontinuous, then $B(E)$ has interior points 
and $E$ contains no affine line $\alpha_0 + \R \beta$, because 
this would lead to 
\[ B(E) \subeq B(\alpha_0 + \R \beta) \subeq \ker \beta.\]

(b) If $\g$ is finite dimensional, then a subset $E \subeq \g'$ 
is semi-equicontinuous if and only if its convex hull contains 
no affine lines (cf.\ \cite[Prop.~V.1.15]{Ne00}). 

(c) If $W \subeq \g$ is an open convex cone, then its dual cone 
\[ W^\star := \{ \alpha \in \g' \: \alpha(W) \subeq [0,\infty[\} \] 
is semi-equicontinuous because 
$s_{W^\star}(W) = \{0\}$. 
\end{rem}

If $\g'_{\rm seq} \subeq \g'$ denotes the set of all semi-equicontinuous 
coadjoint 
orbits, then its annihilator $\fn := (\g'_{\rm seq})^\bot$ is a closed 
$\Ad(G)$-invariant ideal in $\g$, and each semi-equicontinuous coadjoint orbit 
can be identified with a coadjoint orbit of the quotient algebra 
$\g/\fn$. 

\begin{lem} \mlabel{lem:ideal} 
Suppose that 
$\cO_\lambda \subeq \g'$ is semi-equicontinuous. 
If $x \in \g$ satisfies $(\ad x)^2 = 0$, then 
$[x,\g] \subeq \cO_\lambda^\bot$. 
In particular, we have: 
\begin{description}
\item[\rm(a)]  Every abelian ideal $\fa \trile \g$ satisfies 
$[\fa,\g] \subeq \cO_\lambda^\bot$. 
\item[\rm(b)]  If $\g'_{\rm seq}$ separates the points of $\g$, 
i.e., $(\g'_{\rm seq})^\bot=\{0\}$, 
then every abelian ideal in $\g$ is central.
\end{description}
\end{lem}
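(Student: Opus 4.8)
The plan is to turn the condition $(\ad x)^2 = 0$ into the statement that $x$ moves $\lambda$ along an \emph{affine line} under the coadjoint action, and then to rule out such a line by semi-equicontinuity. Concretely, I would first record that, because $\Ad(\exp tx) = e^{t\,\ad x}$ and $(\ad x)^2 = 0$, the exponential series truncates: $\Ad(\exp(\pm tx)) = \id \pm t\,\ad x$. Dualizing, for every $\lambda' \in \g'$ and every $y \in \g$,
\[ (\Ad^*(\exp tx)\lambda')(y) = \lambda'(\Ad(\exp(-tx))y) = \lambda'(y) - t\,\lambda'([x,y]). \]

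Next, fix $\lambda' \in \cO_\lambda$. Since $\cO_\lambda = \Ad^*(G)\lambda$ is $\Ad^*(G)$-invariant, the entire curve $t \mapsto \lambda' - t\,\lambda'([x,\cdot])$ is contained in $\cO_\lambda$. If the functional $\lambda'([x,\cdot])\colon y \mapsto \lambda'([x,y])$ were nonzero, this curve would be a genuine affine line inside $\cO_\lambda$, contradicting Remark~\ref{rem:2.2}(a), which forbids a semi-equicontinuous set from containing an affine line. Hence $\lambda'([x,y]) = 0$ for all $y \in \g$, and, as $\lambda'$ ranges over $\cO_\lambda$, this is exactly $[x,\g] \subeq \cO_\lambda^\bot$.

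For the consequences I would argue as follows. For (a), if $\fa \trile \g$ is an abelian ideal and $x \in \fa$, then $[x,y] \in \fa$ for all $y$, so $(\ad x)^2 y = [x,[x,y]] \in [\fa,\fa] = \{0\}$; the main assertion gives $[x,\g] \subeq \cO_\lambda^\bot$, and since $\cO_\lambda^\bot$ is a linear subspace, letting $x$ vary over $\fa$ yields $[\fa,\g] \subeq \cO_\lambda^\bot$. For (b), I would use that $\g'_{\rm seq}$ is the union of all semi-equicontinuous coadjoint orbits, so $(\g'_{\rm seq})^\bot = \bigcap_\lambda \cO_\lambda^\bot$; part (a) gives $[\fa,\g] \subeq \cO_\lambda^\bot$ for each such orbit, whence $[\fa,\g] \subeq (\g'_{\rm seq})^\bot = \{0\}$, i.e.\ $\fa$ is central.

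The only genuinely delicate step is the opening computation: one must legitimately write $\Ad(\exp tx) = e^{t\,\ad x}$ for the locally convex group $G$ and exploit the truncation forced by $(\ad x)^2 = 0$. Once the affine curve has been placed inside $\cO_\lambda$, the rest is the purely convex-geometric ``line versus semi-equicontinuity'' dichotomy of Remark~\ref{rem:2.2}(a) together with routine bookkeeping of annihilators.
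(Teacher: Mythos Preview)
Your argument is correct and follows essentially the same route as the paper: both exploit that $(\ad x)^2=0$ forces $\Ad^*(\exp tx)\lambda'=\lambda'-t\,\lambda'\circ\ad x$ to trace an affine line inside $\cO_\lambda$, which semi-equicontinuity forbids unless $\lambda'\circ\ad x=0$, and then both apply this to every $\lambda'\in\cO_\lambda$. Your treatment of (a) and (b) is likewise the same as the paper's, only spelled out in more detail.
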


\begin{prf} From 
$\cO_\lambda \supeq \lambda \circ e^{\R \ad x} 
= \lambda + \R (\lambda \circ \ad x)$ 
and the semi-equicontinuity of $\cO_\lambda$,   
we derive that $\lambda \circ \ad x = 0$. 
Applying the same argument to any other 
$\mu \in \cO_\lambda$ implies that 
$[x,\g] \subeq \cO_\lambda^\bot$. 

(a) If $\fa$ is an abelian ideal, then $(\ad x)^2 = 0$ holds 
for every $x \in \fa$, so that (a) follows from the preceding 
argument. 

(b) is an immediate consequence of (a). 
\end{prf}

For a Lie algebra $\g$, we define the {\it derived series} 
$$ D^0(\g) := \g, \quad D^{n+1}(\g) := [D^n(\g), D^n(\g)] 
\quad \mbox{ for } \quad n \in \N_0, $$
and the {\it descending central series} 
$$ C^1(\g) := \g, \quad C^{n+1}(\g) := [\g, C^n(\g)] 
\quad \mbox{ for } \quad n \in \N.$$

\begin{lem}\mlabel{lem:2.2} 
If $\g$ is a Lie algebra in which all abelian ideals are central, 
then the following assertions hold: 
\begin{description}
\item[\rm(a)] Each nilpotent ideal 
$\fn \trile \g$ satisfies $C^2(\fn) \subeq \z(\g)$ and $C^3(\fn) = \{0\}$. 
\item[\rm(b)] If $\fr \trile \g$ is a solvable ideal and 
$n \in \N_0$ is maximal with $D^n(\fr) \not=\{0\}$, then 
$D^n(\fr)$ is central in $\g$. 
\item[\rm(c)] $\z(\ad \g) =\{0\}$. 
\item[\rm(d)] If $\g$ is nilpotent, then $\g$ is abelian. 
\end{description}
\end{lem}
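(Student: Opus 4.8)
The plan is to reduce parts (b)--(d) to two elementary observations and to put the real work into part (a). The two facts I will use repeatedly are: (i) the standing hypothesis that every abelian ideal of $\g$ is central; and (ii) that each term $C^j(\fn)$ of the descending central series of an ideal $\fn\trile\g$, and likewise each derived term $D^j(\fr)$ of a solvable ideal $\fr\trile\g$, is again an $\Ad(G)$-invariant ideal of $\g$, which follows from the Jacobi identity together with $[\g,\fn]\subeq\fn$. Granting (ii), part (b) is immediate: if $n$ is maximal with $D^n(\fr)\neq\{0\}$, then $D^{n+1}(\fr)=[D^n(\fr),D^n(\fr)]=\{0\}$, so $D^n(\fr)$ is an abelian ideal of $\g$ and hence central by hypothesis.

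For part (a) the key is to bound the nilpotency class of $\fn$ by $2$; once $C^3(\fn)=\{0\}$ is known, one has $[C^2(\fn),C^2(\fn)]\subeq[\fn,C^2(\fn)]=C^3(\fn)=\{0\}$, so $C^2(\fn)$ is an abelian ideal of $\g$, hence central, which yields both assertions simultaneously. To bound the class I would argue by contradiction. Suppose $\fn$ has class $k\ge 3$, i.e. $C^k(\fn)\neq\{0\}=C^{k+1}(\fn)$, and pick an integer $j$ with $(k+1)/2\le j\le k-1$, which exists precisely because $k\ge 3$. Then $2j\ge k+1$, so from the standard inclusion $[C^j(\fn),C^j(\fn)]\subeq C^{2j}(\fn)\subeq C^{k+1}(\fn)=\{0\}$ we see that $C^j(\fn)$ is abelian, hence central by hypothesis. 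But then $C^{j+1}(\fn)=[\fn,C^j(\fn)]\subeq[\g,\z(\g)]=\{0\}$, and since $j+1\le k$ this forces $C^k(\fn)\subeq C^{j+1}(\fn)=\{0\}$, contradicting $C^k(\fn)\neq\{0\}$.

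Parts (c) and (d) are then short. For (c), note that $\ad x$ lies in $\z(\ad\g)$ exactly when $\ad[x,y]=0$ for all $y$, i.e. when $[x,\g]\subeq\z(\g)$. For such an $x$ the subspace $\R x+\z(\g)$ is abelian, because $[x,\z(\g)]=\{0\}$, and is an ideal, because $[\g,x]\subeq\z(\g)\subeq\R x+\z(\g)$; hence it is central by hypothesis, so $x\in\z(\g)$ and $\ad x=0$. This proves $\z(\ad\g)=\{0\}$. Part (d) now follows from (c): if $\g$ were nilpotent and non-abelian, then $\ad\g\cong\g/\z(\g)$ would be a nonzero nilpotent Lie algebra, whose last nonvanishing lower central term lies in its center; thus $\z(\ad\g)\neq\{0\}$, contradicting (c), so $\g$ must be abelian.

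The main obstacle, and the reason I deliberately avoid the naive inductive reduction, is that passing to the quotient $\g/C^k(\fn)$ by the central ideal $C^k(\fn)$ does not obviously preserve the hypothesis: an ideal $\fa$ with $[\fa,\fa]\subeq\z(\g)$ need not be abelian (a Heisenberg ideal is the prototype), so abelian ideals of the quotient need not lift to central ideals of $\g$. The device that circumvents this entirely is the observation used in part (a) that a sufficiently deep term $C^j(\fn)$ of the lower central series is \emph{itself} abelian---its self-bracket already lands in the vanishing term $C^{2j}(\fn)$---and is therefore central directly; a single central term then collapses the remainder of the series, with no passage to a quotient required.
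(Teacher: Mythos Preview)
Your proof is correct and follows essentially the same approach as the paper's. In part (a) the paper makes the specific choice $j=k-1$ where you allow any $j$ in the range $(k+1)/2\le j\le k-1$, but the argument is identical; in part (d) the paper first applies (a) to conclude $\ad\g$ is abelian before invoking (c), whereas you bypass (a) via the standard fact that a nonzero nilpotent Lie algebra has nontrivial center---both routes are equally short and neither constitutes a genuinely different strategy.
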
 

\begin{prf} (a) Let $n \in \N_0$ be such that 
$C^{n+1}(\fn)=\{0\}$ and minimal with this property. 
We may w.l.o.g.\ assume that 
$\fn$ is non-abelian, i.e., $n \geq 2$. 
If $n > 2$, then the relation 
\[  [C^{n-1}(\fn), C^{n-1}(\fn)] 
\subeq C^{2n-2}(\fn) \subeq C^{n+1}(\fn)= \{0\} \] 
leads to $C^{2n-2}(\fn)= \{0\}$, so that $C^{n-1}(\fn)$ is an abelian ideal 
of $\g$, hence central. This contradicts $C^n(\fn) \not=\{0\}$. 
Therefore $n \leq 2$, and this means that $C^3(\fn) = \{0\}$. 
This in turn implies that 
$[C^2(\fn), C^2(\fn)] \subeq C^4(\fn) = \{0\}$, so that $C^2(\fn)$ 
is an abelian ideal of $\g$, hence central. 

(b) This is an immediate consequence of the fact that 
$D^n(\fr)$ is an abelian ideal of $\g$. 

(c) Suppose that $\ad x \in \z(\ad \g)$. Then 
$\fa := \ad^{-1}(\R \ad x) = \R x + \z(\g)$ is an abelian 
ideal of $\g$, hence central. We conclude that 
$x \in \z(\g)$, and finally that $\ad x = 0$. 

(d) From (a) we derive that $C^2(\g) = [\g,\g]$ is central in $\g$, 
and thus $\ad \g$ is abelian.  In view of (c), this leads to 
$\ad \g = \{0\}$, i.e., $\g = \z(\g)$ is abelian. 
\end{prf}

\begin{thm} \mlabel{thm:1.3} If $\g$ is either nilpotent or 
two step solvable, i.e., $D^2(\g) = \{0\}$,  
then $\g'_{\rm seq} \subeq [\g,\g]^\bot$, i.e., 
all semi-equicontinuous coadjoint orbits are trivial, i.e., 
consist of one point. 
\end{thm}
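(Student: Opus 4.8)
The plan is to reduce to the case in which $\g'_{\rm seq}$ separates the points of $\g$ and then to play the two structural lemmas off against each other. First I would pass to the quotient $\bar\g := \g/\fn$, where $\fn := (\g'_{\rm seq})^\bot$ is the closed $\Ad(G)$-invariant ideal introduced above. By the identification recorded before Lemma~\ref{lem:ideal}, every semi-equicontinuous coadjoint orbit of $\g$ lies in $\fn^\bot$ and is a coadjoint orbit of $\bar\g$; since the quotient map $\g \to \bar\g$ is continuous and open and each support functional $s_{\cO_\lambda}$ factors through it (being constant along $\fn$), these orbits stay semi-equicontinuous in $\bar\g'$, and by the very definition of $\fn$ as their common annihilator they separate the points of $\bar\g$. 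As $\bar\g$ is again nilpotent, resp.\ $2$-step solvable, it then suffices to prove that under the separation hypothesis $\g$ must be abelian: indeed $\bar\g$ abelian means $[\g,\g] \subeq \fn$, which is precisely the assertion $\g'_{\rm seq} \subeq [\g,\g]^\bot$, and any $\lambda$ vanishing on $[\g,\g]$ satisfies $\lambda \circ \ad x = 0$ for all $x$, hence is a fixed point of the coadjoint action, so $\cO_\lambda = \{\lambda\}$.

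So assume now that $\g'_{\rm seq}$ separates the points of $\g$. Then Lemma~\ref{lem:ideal}(b) applies and yields that every abelian ideal of $\g$ is central, which is exactly the standing hypothesis of Lemma~\ref{lem:2.2}. In the nilpotent case I would simply invoke Lemma~\ref{lem:2.2}(d) to conclude that $\g$ is abelian. In the $2$-step solvable case, $D^2(\g) = \{0\}$ says that the characteristic ideal $[\g,\g] = D^1(\g)$ is abelian; being an abelian ideal it is central by hypothesis. Consequently $C^3(\g) = [\g,[\g,\g]] \subeq [\g,\z(\g)] = \{0\}$, so $\g$ is nilpotent, and a second application of Lemma~\ref{lem:2.2}(d) gives that $\g$ is abelian. (Alternatively one applies Lemma~\ref{lem:2.2}(b) to the solvable ideal $\fr = \g$ directly, with $n = 1$.)

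The two lemma-invocations are routine; the step that needs genuine care is the reduction to the separating case. There are two points. The first is topological: one must check that boundedness of $s_{\cO_\lambda}$ near a point of $\g$ descends to boundedness near the image point in $\bar\g$, which is where openness of the quotient map in the locally convex category and the relation $\cO_\lambda \subeq \fn^\bot$ enter. The second, and the real subtlety, is that Lemma~\ref{lem:ideal}(b) is stated for the Lie algebra of a connected group, whereas $\bar\g$ need not obviously be presented as such.

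I would sidestep this by re-running the short computation of Lemma~\ref{lem:ideal} \emph{upstairs} in $\g$, where the group $G$ is available. Given $\bar x \in \bar\g$ generating an abelian ideal $\bar\fa \trile \bar\g$, lift it to $x \in \fa := q^{-1}(\bar\fa)$, an ideal of $\g$ with $[\fa,\fa] \subeq \fn$. For $y \in \g$ one has $[x,y] \in \fa$ and hence $(\ad x)^2 y = [x,[x,y]] \in [\fa,\fa] \subeq \fn$, and since $\fn$ is an ideal this gives $(\ad x)^k\g \subeq \fn$ for all $k \ge 2$. As every $\lambda \in \g'_{\rm seq}$ annihilates $\fn$, we get $\Ad^*(\exp tx)\lambda = \lambda \circ e^{-t\,\ad x} = \lambda - t(\lambda \circ \ad x)$, so $\cO_\lambda \supeq \lambda + \R(\lambda \circ \ad x)$; semi-equicontinuity forbids affine lines (Remark~\ref{rem:2.2}(a)), forcing $\lambda \circ \ad x = 0$. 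Since this holds for all $\lambda \in \g'_{\rm seq}$, which separate points, we obtain $[x,\g] \subeq \fn$, i.e.\ $\bar x$ is central in $\bar\g$. Thus abelian ideals of $\bar\g$ are central, and the argument of the previous two paragraphs applies to $\bar\g$, completing the proof.
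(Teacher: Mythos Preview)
Your proof follows exactly the strategy of the paper: pass to the quotient by $\fn = (\g'_{\rm seq})^\bot$, observe that in the quotient every abelian ideal is central via Lemma~\ref{lem:ideal}(b), and then use Lemma~\ref{lem:2.2} (in the $2$-step solvable case first showing $[\fq,\fq]$ central, hence $\fq$ nilpotent, then invoking part~(d)). The paper's proof is identical in outline, though it simply applies Lemma~\ref{lem:ideal} to the quotient without comment; your explicit ``upstairs'' recomputation---showing that for a lift $x$ of an element of an abelian ideal of $\bar\g$ one has $(\ad x)^k\g \subeq \fn$ for $k\ge 2$, so that $\Ad^*(\exp tx)\lambda$ is affine in $t$ for every $\lambda \in \g'_{\rm seq}$---is a welcome clarification of why the lemma transfers to $\bar\g$ even without exhibiting it as the Lie algebra of a connected group.
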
 

\begin{prf} Passing to the quotient $\fq := \g/\fn$, 
where $\fn := (\g'_{\rm seq})^\bot$, we obtain a 
Lie algebra with the same semi-equicontinuous coadjoint 
orbits as $\g$, and for which $\fq'_{\rm seq}$ separates the 
points. Hence every abelian ideal in $\fq$ is central 
by Lemma~\ref{lem:ideal}. 

If $\g$ is two step solvable, then the same holds for 
$\fq$, and Lemma~\ref{lem:2.2} implies that the abelian ideal 
$[\fq,\fq] \trile \fq$ is central, i.e., $C^3(\fq) = \{0\}$. 
In particular, $\fq$ is nilpotent. 
Finally, Lemma~\ref{lem:2.2}(d) implies that $\fq$ is abelian, 
i.e., that $[\g,\g] \subeq \fn$. This means that 
every semi-equicontinuous coadjoint orbit in $\g$ is trivial. 
\end{prf}

\section{Oscillator algebras with semi-equicontinuous coadjoint orbits} 
\mlabel{sec:3}

The two main results of this section are the 
characterization of those oscillator algebras 
$\g(V,\omega,D)$ with non-trivial semi-equicontinuous coadjoint orbits 
(Theorem~\ref{thm:char}) and the description of the 
semi-equicontinuous coadjoint orbits for these algebras 
(Proposition~\ref{prop:char}). 

\begin{defn} A pair $(V,Q)$, where $V$ is a locally convex 
space and $Q$ is a continuous symmetric positive definite 
bilinear form on $V$ is called a {\it locally convex euclidean space}. 
For $\alpha \in V'$ we then write 
\[ \|\alpha\|_Q := \sup \{ \alpha(v) \: Q(v,v) \leq 1 \}
\in [0,\infty] \] 
for the norm of $\alpha$ with respect to~$Q$.
We write $V_Q$ for the completion of $V$ with respect to $Q$ and 
observe that the condition $\|\alpha\|_Q < \infty$ is equivalent 
to the existence of a continuous extension of $\alpha$ to~$V_Q$.
\end{defn}

\begin{lem}\mlabel{lem:quad} 
Let $(V,Q)$ be a locally convex euclidean space. 
Then the quadratic polynomial 
$$ f_\alpha \: V \to \R, \quad f_\alpha(v) = \alpha(v) + Q(v,v) $$
is bounded from below if and only if 
$Q$ is positive semidefinite and $\alpha$ is $Q$-continuous, i.e., 
$\|\alpha\|_Q < \infty.$
In this case 
$$ \inf f_\alpha = - \frac{1}{4} \|\alpha\|_Q^2. $$
In particular, for a subset $A \subeq V'$, the set 
$\{ f_\alpha \: \alpha \in A \}$ 
is uniformly bounded from below on $V$ if and only 
if 
$$\sup \{ \|\alpha\|_Q \: \alpha \in A \} < \infty. $$
\end{lem}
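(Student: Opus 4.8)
The plan is to prove the two implications separately, with the sharp value of the infimum falling out of the same completing-the-square computation that gives the lower bound. Throughout, the euclidean hypothesis forces $Q$ to be positive definite, so the clause ``$Q$ is positive semidefinite'' in the criterion is automatically satisfied; the genuine content is the equivalence of boundedness below with $\|\alpha\|_Q < \infty$. (Were $Q$ only assumed symmetric and continuous, a vector $v$ with $Q(v,v) < 0$ would give $f_\alpha(tv) = t\alpha(v) + t^2 Q(v,v) \to -\infty$ as $t \to \infty$, which is exactly why positive semidefiniteness appears as a necessary condition; under the standing hypothesis this case does not occur.)

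First I would treat the case $\|\alpha\|_Q < \infty$, writing $c := \|\alpha\|_Q$. From the definition of $\|\alpha\|_Q$ as a supremum, together with the scaling $v \mapsto v/\sqrt{Q(v,v)}$ for $v \neq 0$, one gets $\alpha(v) \geq -c\sqrt{Q(v,v)}$ for every $v \in V$. Setting $t := \sqrt{Q(v,v)} \geq 0$ then yields $f_\alpha(v) \geq t^2 - ct$, and the one-variable quadratic $t \mapsto t^2 - ct$ attains its minimum $-c^2/4$ at $t = c/2$. Hence $f_\alpha \geq -\tfrac14 c^2$ everywhere, so $f_\alpha$ is bounded from below. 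To see that this bound is approached, I would pick $u_n \in V$ with $Q(u_n,u_n) \leq 1$ and $\alpha(u_n) \to c$ (possible by definition of $c$ as a supremum) and test on $v_n := -\tfrac{c}{2}u_n$; a direct computation gives $f_\alpha(v_n) = -\tfrac{c}{2}\alpha(u_n) + \tfrac{c^2}{4}Q(u_n,u_n)$, whose $\limsup$ is at most $-c^2/4$, matching the lower bound. This yields $\inf f_\alpha = -\tfrac14\|\alpha\|_Q^2$.

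For the converse I would argue contrapositively: if $\|\alpha\|_Q = \infty$, choose $u_n$ with $Q(u_n,u_n) \leq 1$ and $\alpha(u_n) \to +\infty$, and evaluate $f_\alpha$ on the optimally scaled vectors $v_n := -\tfrac12 \alpha(u_n) u_n$. Then $f_\alpha(v_n) = -\tfrac12\alpha(u_n)^2 + \tfrac14 \alpha(u_n)^2 Q(u_n,u_n) \leq -\tfrac14 \alpha(u_n)^2 \to -\infty$, so $f_\alpha$ is not bounded from below. Combined with the previous paragraph, this establishes both the equivalence and the formula for the infimum. The final ``in particular'' assertion is then immediate: for a family $A \subeq V'$ the infimum formula gives $\inf_{v} f_\alpha(v) = -\tfrac14\|\alpha\|_Q^2$ for each $\alpha \in A$, so $\{f_\alpha \: \alpha \in A\}$ is uniformly bounded from below precisely when $\sup\{\|\alpha\|_Q \: \alpha \in A\} < \infty$.

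The computations are elementary, so there is no deep obstacle. The only points requiring care are the passage to the \emph{exact} value of the infimum and the unboundedness in the converse; both hinge on producing near-maximizers of $\alpha$ on the $Q$-unit ball (supplied directly by the definition of $\|\alpha\|_Q$) and then rescaling them by the optimal factor $c/2$, respectively $\alpha(u_n)/2$, that completing the square dictates.
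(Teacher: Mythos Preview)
Your argument is correct and complete. It differs from the paper's approach: whereas you stay entirely inside $V$ and use only the supremum definition of $\|\alpha\|_Q$ together with a one-variable completing-the-square estimate, the paper passes to the Hilbert completion $V_Q$, represents $\alpha$ by an element $v_\alpha \in V_Q$ via Riesz, locates the exact minimizer $v_0 = -\tfrac12 v_\alpha$ of the extended quadratic $\overline{f_\alpha}$ on $V_Q$, and then invokes density of $V$ in $V_Q$ to conclude $\inf_V f_\alpha = \inf_{V_Q} \overline{f_\alpha} = -\tfrac14\|\alpha\|_Q^2$. Your route is more elementary and self-contained (no completion, no Riesz), and your handling of the converse via the optimally scaled near-maximizers $v_n = -\tfrac12\alpha(u_n)u_n$ is cleaner than the paper's somewhat terse convexity remark that ``$f_\alpha(B_Q) = \R$''. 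The paper's route, on the other hand, produces an actual minimizer in the completion rather than just a minimizing sequence, and it foreshadows the Riesz-type map $\eta \: V \to V_Q$ that plays a structural role later (condition~(C3) in the Characterization Theorem). Your observation that the positive-semidefiniteness clause is redundant under the stated euclidean hypothesis is also correct; the paper's proof appears to have been written with a slightly more general symmetric $Q$ in mind.
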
 

\begin{prf} If $Q(v,v) < 0$, then $f_\alpha$ is not bounded from below 
on the line 
$\R v$. Therefore $Q$ is positive semidefinite if $f_\alpha$ is bounded 
from below. It is also necessary that $\alpha$ vanishes on each 
$Q$-isotropic vector. 
If $\alpha$ is unbounded on the $Q$-unit ball 
$B_Q := \{ v \in V \: Q(v,v)< 1\}$, then the convexity of $B_Q = - B_Q$ 
implies that 
$f_\alpha(B_Q) = \R$. Therefore 
$\|\alpha\|_Q < \infty$ is necessary for $f_\alpha$ to be bounded from below. 

Suppose that $\inf f_\alpha > - \infty$. Then 
$\alpha$ is represented by an element $v_\alpha \in V_Q$, and 
$$\oline f_\alpha(v) := \alpha(v) + Q(v,v) = Q(v_\alpha, v) + Q(v,v) $$
defines a polynomial map $V_Q \to \R$. For $v_0 \in V_Q$ we have 
$$\dd \oline f_\alpha(v_0) = \alpha + 2 Q(v_0,\cdot) 
= Q(v_\alpha + 2 v_0,\cdot), $$
which vanishes only for $v_0 = -\frac{1}{2}v_\alpha$. In this point 
$\oline f_\alpha$ takes its minimal value 
$$ \inf \oline f_\alpha 
= - \frac{1}{2} \alpha(v_\alpha) + \frac{1}{4}Q(v_\alpha, v_\alpha)
= - \frac{1}{4}Q(v_\alpha, v_\alpha)
= - \frac{1}{4}\|\alpha\|_Q^2. $$
Since $V$ is dense in $V_Q$, we finally derive 
$\inf f_\alpha = \inf \oline f_\alpha = - \frac{1}{4}\|\alpha\|_Q^2.$
\end{prf}

\begin{rem}\mlabel{rem:3.2} For a quadratic functional of the form 
$f(v) = \alpha(v) + t Q(v,v)$ 
for which $Q$ is positive definite and $t > 0$, we obtain 
with Lemma~\ref{lem:quad} 
$$ \inf f = t\cdot \inf(t^{-1} \alpha + Q) 
= -\frac{t}{4} \|t^{-1}\alpha\|_Q^2 
= -\frac{1}{4t} \|\alpha\|_Q^2. $$
\end{rem}

\begin{rem} \mlabel{rem:switchd} Note that the map 
$$ \phi \: \heis(V,\omega) \rtimes_D \R \to \heis(V,\omega) \rtimes_{-D} \R, 
\quad (z,v,t) := (z,v,-t)$$ 
defines an isomorphism of Lie algebras. Therefore we can always 
pass from $D$ to $-D$ if this is convenient. An analogous 
statement holds for the corresponding Lie groups.
\end{rem}
  
We want to describe the subset 
$\g'_{\rm seq}$ of semi-equicontinuous coadjoint orbits 
and in particular find necessary 
and sufficient conditions on $(V,\omega, D)$ 
for the existence of {\it non-trivial} 
semi-equicontinuous coadjoint orbits. 

\begin{rem}
  \mlabel{rem:3.4} 
For $\lambda = (z^*, \alpha, t^*)$ and 
$v \in V$ we derive from $(\ad v)^3 = 0$ that 
\begin{align} \label{eq:acoad1}
\Ad^*(0,v,0)\lambda 
&= \lambda - \lambda \circ \ad v + \frac{1}{2} \lambda \circ (\ad v)^2\notag \\
&= \lambda + (0, -z^* i_v \omega,  \alpha(Dv)) 
 +  \frac{1}{2} (0,0, z^* \omega(Dv,v))\notag \\
&= \Big(z^*, \alpha - z^* i_v\omega, t^* + \alpha(Dv) 
+  \frac{z^*}{2} \omega(Dv,v)\Big), 
\end{align}
i.e., 
\begin{equation}
  \label{eq:acoad2}
(\Ad^*(0,v,0)\lambda)(z,x,t) = 
z^*z + \alpha(x) - z^* \omega(v,x) + tt^* 
+ t \alpha(Dv) + \frac{tz^*}{2} \omega(Dv,v). 
\end{equation}
In particular, 
$\Ad^*(V)\lambda = \{\lambda\}$ is equivalent to 
$z^* = 0$ and $D^*\alpha = 0$, which is equivalent to 
the triviality of the coadjoint orbit $\cO_\lambda$, resp., to 
$\lambda \in [\g,\g]^\bot$. 

Assume that $Q(v,w) := \omega(Dv,w)$ is positive semidefinite and 
that $tz^* > 0$, so that the quadratic term in \eqref{eq:acoad2} 
is positive definite on $V$. 
From Remark~\ref{rem:3.2} we then derive that 
\begin{equation}
  \label{eq:acoad3}
\inf\la \Ad^*(0,V,0)\lambda, (z,x,t) \ra
= z^*z + \alpha(x) + tt^* - \frac{1}{2tz^*} 
\|t D^*\alpha + z^* i_x\omega\|_Q^2. 
\end{equation}
\end{rem}

\begin{lem} \mlabel{lem:2.3} 
Suppose that $\lambda = (z^*,\alpha,t^*)\in \g'_{\rm seq}$ 
and that there exists an element 
$(z,x,t) \in B(\cO_\lambda)^0$ with $z^* t > 0$. 
Then the following assertions hold: 
\begin{description}
\item[\rm(i)] The symmetric bilinear form 
$Q \: V \times V \to \R, Q(v,w) := \omega(Dv,w)$
is positive definite.
\item[\rm(ii)] $D^*\alpha$ and all linear forms 
$i_x\omega$, $x \in V$, are $Q$-continuous with 
$x \mapsto \|i_x\omega\|_Q$ bounded on a $0$-neighborhood in $V$. 
\end{description}
\end{lem}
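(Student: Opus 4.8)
The plan is to exploit the explicit quadratic structure of the coadjoint action computed in Remark~\ref{rem:3.4}. For a fixed evaluation point $(z,x,t)$ and varying $v \in V$, formula \eqref{eq:acoad2} exhibits
$$ g(v) := \la \Ad^*(0,v,0)\lambda, (z,x,t) \ra = c + (z^* i_x\omega + t D^*\alpha)(v) + \tfrac{tz^*}{2} Q(v,v), $$
a quadratic polynomial in $v$ with constant term $c = z^*z + \alpha(x) + tt^*$ and quadratic form $\tfrac{tz^*}{2}Q$. Since $\Ad^*(0,V,0)\lambda \subeq \cO_\lambda$ and $(z,x,t) \in B(\cO_\lambda)$, the value $\inf_v g$ is bounded below by $\inf \la \cO_\lambda, (z,x,t)\ra > -\infty$, so $g$ is bounded from below on $V$. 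First I would feed this into Lemma~\ref{lem:quad} (applied to the positive multiple $\tfrac{tz^*}{2}Q$): boundedness below forces the quadratic form to be positive semidefinite, and as $tz^* > 0$ this yields that $Q$ is positive semidefinite and that $z^* i_x\omega + t D^*\alpha$ is $Q$-continuous.

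To promote semidefiniteness to definiteness (assertion (i)), I would argue that an isotropic vector produces a forbidden affine line. Suppose $v_0 \neq 0$ with $Q(v_0,v_0)=0$. Positive semidefiniteness gives $Q(v_0,\cdot)=0$, i.e.\ $\omega(Dv_0,\cdot)=0$, so $Dv_0 = 0$ by the (weak) nondegeneracy of $\omega$; in particular $\alpha(Dv_0)=0$ and $\omega(Dv_0,v_0)=0$. Substituting $v = s v_0$ into \eqref{eq:acoad1} then collapses the last two coordinates, leaving $\Ad^*(0,sv_0,0)\lambda = \lambda + s\,(0,-z^* i_{v_0}\omega,0)$. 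Here $z^* \neq 0$ (because $z^* t > 0$) and $i_{v_0}\omega \neq 0$ (again by nondegeneracy of $\omega$), so this is a genuine affine line inside $\cO_\lambda$, contradicting semi-equicontinuity via Remark~\ref{rem:2.2}(a). Hence $Q$ is positive definite.

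For assertion (ii) I would use that $(z,x,t)$ lies in the \emph{interior} of $B(\cO_\lambda)$. Repeating the first paragraph for nearby evaluation points $(z,x',t')$ — which still satisfy $z^* t' > 0$ by continuity and still lie in $B(\cO_\lambda)$ — shows that $z^* i_{x'}\omega + t' D^*\alpha$ is $Q$-continuous for $(x',t')$ in a whole neighborhood of $(x,t)$. Taking the difference of the cases $(x,t)$ and $(x,t')$ with $t' \neq t$ isolates $(t-t')D^*\alpha$, so $D^*\alpha$ is $Q$-continuous; subtracting $tD^*\alpha$ then gives $Q$-continuity of $i_{x'}\omega$ for all $x'$ near $x$. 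For the uniform bound I would invoke the minimum formula \eqref{eq:acoad3}, now valid since $Q$ is positive definite and $z^* t > 0$: combined with $\Ad^*(0,V,0)\lambda \subeq \cO_\lambda$ it gives, for $x' = x + u$,
$$ \tfrac{1}{2tz^*}\|t D^*\alpha + z^* i_{x+u}\omega\|_Q^2 \le z^*z + \alpha(x+u) + tt^* + s_{\cO_\lambda}(z,x+u,t). $$
Because $\cO_\lambda$ is semi-equicontinuous, its support functional $s_{\cO_\lambda}$ is convex and bounded above near a point, hence locally bounded on the interior of its effective domain $B(\cO_\lambda)$; thus the right-hand side stays bounded for $u$ in some $0$-neighborhood $U$ (the term $\alpha(u)$ being bounded as $\alpha \in V'$). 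The triangle inequality, subtracting the fixed finite-norm element $t D^*\alpha + z^* i_x\omega$, then bounds $\|z^* i_u\omega\|_Q$, hence $\|i_u\omega\|_Q$, uniformly on $U$; by homogeneity this also re-proves $Q$-continuity of $i_x\omega$ for every $x \in V$.

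The main obstacle is this last step: converting the qualitative hypotheses ``$\cO_\lambda$ semi-equicontinuous'' and ``$(z,x,t)$ interior in $B(\cO_\lambda)$'' into the quantitative uniform bound on $u \mapsto \|i_u\omega\|_Q$. This hinges on the local boundedness of the convex support functional $s_{\cO_\lambda}$ at an interior point of its effective domain, which is precisely what semi-equicontinuity supplies; everything else is bookkeeping with \eqref{eq:acoad1}--\eqref{eq:acoad3} and Lemma~\ref{lem:quad}.
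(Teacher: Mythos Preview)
Your argument is correct and closely parallels the paper's in its use of \eqref{eq:acoad2} together with Lemma~\ref{lem:quad}; the treatment of~(ii) is essentially the same, only slightly less economical in that you also vary~$t$ to isolate $D^*\alpha$, whereas the paper varies $x$ alone (keeping $t$ fixed) and extracts both the uniform bound on $\|i_u\omega\|_Q$ and the $Q$-boundedness of $D^*\alpha$ from the single family $\{z^* i_{x'}\omega + tD^*\alpha : x' \in x+U\}$ via the uniform clause of Lemma~\ref{lem:quad}. Both routes rest on the same convexity fact---local boundedness of $s_{\cO_\lambda}$ on $B(\cO_\lambda)^0$---which you make explicit and the paper uses implicitly.

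The genuine difference is in the positive definiteness step of~(i). The paper argues as follows: if $Q(v,v)=0$ then $Dv=0$, and examining the restriction of the quadratic $f_x$ to the line $\R v$ forces the linear term to vanish too, i.e.\ $z^*\omega(x,v)=0$; then varying $x$ over a neighborhood $x+U$ (still inside $B(\cO_\lambda)^0$) yields $\omega(U,v)=0$ and hence $v=0$ by nondegeneracy. Your route is different and arguably cleaner: once $Dv_0=0$, the one-parameter family $\Ad^*(0,sv_0,0)\lambda$ collapses to the affine line $\lambda + s(0,-z^* i_{v_0}\omega,0)$ inside $\cO_\lambda$, and Remark~\ref{rem:2.2}(a) kills it directly. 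Your argument avoids the interior-point hypothesis for this step (semi-equicontinuity alone suffices), while the paper's version has the minor advantage that the neighborhood~$U$ it produces is reused verbatim in part~(ii).
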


\begin{prf} (i) In view of \eqref{eq:acoad2}, the quadratic function 
$$ f_x(v) := (\Ad^*(v)\lambda)(z,x,t) = 
z^*z + \alpha(x) - z^* \omega(v,x) + tt^* 
+ t \alpha(Dv) + \frac{tz^*}{2} \omega(Dv,v) $$
on $V$ is bounded from below. Its 
restriction to $\R v$ is bounded from below if and only if 
either 
$\omega(Dv,v) >  0$ or 
\begin{equation}  \label{eq:flatcond}
\omega(Dv,v) = 0 \quad \mbox{ and } \quad t\alpha(Dv) = - z^*  \omega(x,v).
\end{equation}
This shows that $Q$ is positive semidefinite. 

To see that $Q$ is actually positive definite, suppose that 
$v \in V$ satisfies  $Q(v,v) = 0$. Then the positive semidefiniteness 
of $Q$ implies $\{0\} = Q(v,V) = \omega(Dv,V)$, which in turn leads to 
$Dv = 0$ because $\omega$ is non-degenerate. By \eqref{eq:flatcond} we now obtain that $\omega(x,v)=0$.
Since $(z,x,t) \in B(\cO_\lambda)^0$ and $\lambda \in \g'_{\rm seq}$, there exists an open $0$-neighborhood 
$U \subeq V$ such that 
$$ \inf_{x' \in x + U} \inf 
\la \cO_\lambda, (z,x',t) \ra > - \infty. $$
In particular, $(z,x',t) \in B(\cO_\lambda)^0$, 
so that the preceding argument leads to $\omega(U,v) = \{0\}$, 
and hence to $v = 0$. This proves that $Q$ is positive definite. 

(ii) Let $U$ be as in (i) above. 
From Lemma~\ref{lem:quad} and the uniform semiboundedness 
of the quadratic functions $f_{x'}$, $x' \in x + U$, from below, 
we derive 
that the set of linear functionals 
\[  z^* i_{x'}\omega + t D^* \alpha, \quad x' \in x + U, \] 
is uniformly bounded with respect to $\|\cdot\|_Q$. As $z^* \not=0$, 
the set 
$i_y\omega$, $y \in U$, is uniformly $Q$-bounded. 
This implies in particular that all functionals $i_v\omega$, $v \in V$, 
are $Q$-bounded, and hence that $D^*\alpha$ is also $Q$-bounded. 
\end{prf} 

\begin{rem} \mlabel{rem:orbred} If $\lambda = (z^*, \alpha, t^*)\in \g'$ 
vanishes on the central element $(1,0,0)$, i.e., $z^* = 0$, then 
$\cO_\lambda$ vanishes on $(1,0,0)$, hence 
can be considered as a coadjoint orbit of the 
$2$-step solvable quotient Lie algebra 
$V \rtimes_D \R$. Now Theorem~\ref{thm:1.3} implies 
that $\cO_\lambda$ is semi-equicontinuous if and only if 
it is trivial.   We conclude that $z^* \not=0$ whenever $\cO_\lambda$ 
is nontrivial and semi-equicontinuous. 
\end{rem}

\begin{thm} \mlabel{thm:char} {\rm(Characterization Theorem)}
The Lie algebra $\g = \g(V,\omega,D)$ has non-trivial semi-equicontinuous 
coadjoint orbits if and only if, possibly after replacing $D$ by 
$-D$, the following conditions are satisfied: 
\begin{description}
\item[\rm(C1)] $Q(x,y) := \omega(Dx,y)$ is positive definite. 
\item[\rm(C2)] All functionals $i_x\omega$, $x \in V$, are $Q$-continuous. 
\item[\rm(C3)] If $V_Q$ denotes the Hilbert completion of $V$ with respect to 
$Q$, then the map 
$\eta \: V \to V_Q$, defined by 
$$ Q(\eta(x), y) = (i_x\omega)(y) = \omega(x,y) \quad \mbox{ for } \quad 
x,y \in V, $$
is continuous. 
\end{description}
\end{thm}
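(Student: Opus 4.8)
The plan is to prove the two implications separately: for necessity I would reduce to Lemma~\ref{lem:2.3} after arranging a suitable interior point of $B(\cO_\lambda)$, and for sufficiency I would exhibit one explicit orbit and verify semi-equicontinuity directly from the coadjoint formula~\eqref{eq:acoad2}. Throughout, the two nontrivial moves are the sign normalization of $D$ and the translation of Lemma~\ref{lem:2.3}(ii) into condition~(C3); everything else is substitution into~\eqref{eq:acoad2} and the completion-of-the-square of Lemma~\ref{lem:quad}.

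\textbf{Necessity.} Suppose $\cO_\lambda$ is non-trivial and semi-equicontinuous, with $\lambda = (z^*,\alpha,t^*)$. By Remark~\ref{rem:orbred} we have $z^*\neq 0$. Since $\cO_\lambda$ is semi-equicontinuous, $B(\cO_\lambda)$ has interior points, and the coordinate functional $(z,x,t)\mapsto t$ cannot vanish identically on the nonempty open set $B(\cO_\lambda)^0$; hence there is an interior point $(z,x,t)$ with $t\neq 0$, and therefore with $z^* t\neq 0$. If $z^* t<0$, I would replace $D$ by $-D$ via the isomorphism $\phi$ of Remark~\ref{rem:switchd}, which fixes the central coordinate $z^*$ and sends $t\mapsto -t$; being an isomorphism it carries $\cO_\lambda$ to a non-trivial semi-equicontinuous orbit of $\g(V,\omega,-D)$ whose $B$-cone contains an interior point with $z^* t>0$. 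In either case the hypotheses of Lemma~\ref{lem:2.3} are met, and I conclude from~(i) that $Q(x,y)=\omega(Dx,y)$ is positive definite, which is~(C1), and from~(ii) that every $i_x\omega$ is $Q$-continuous, which is~(C2), with $x\mapsto\|i_x\omega\|_Q$ bounded on a $0$-neighbourhood of $V$. This last boundedness is precisely~(C3): the defining relation $Q(\eta(x),y)=(i_x\omega)(y)$ gives $\|\eta(x)\|_Q=\|i_x\omega\|_Q$, so boundedness of $\|i_x\omega\|_Q$ near $0$ is exactly the continuity of the linear map $\eta\: V\to V_Q$.

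\textbf{Sufficiency.} Conversely, assuming (C1)--(C3) after the sign normalization, I would take $\lambda:=(1,0,0)$, so that $z^*=1$ and $\alpha=0$. Since $\alpha=0$ and $\gamma(s)\in\Sp(V,\omega)$, the one-parameter group $\gamma$ fixes $\lambda$ under the coadjoint action, $\Ad^*(0,0,s)\lambda=(1,0,0)=\lambda$; combined with the triviality of the central action, this gives $\cO_\lambda=\Ad^*(0,V,0)\lambda$, so the whole orbit is governed by~\eqref{eq:acoad2}. For $t>0$, conditions (C1) and (C2) let me apply Lemma~\ref{lem:quad} in the scaled form of Remark~\ref{rem:3.2} to the quadratic function $v\mapsto z+(i_x\omega)(v)+\tfrac{t}{2}Q(v,v)$ and obtain, as in~\eqref{eq:acoad3},
\[ \inf \la \cO_\lambda,(z,x,t)\ra = z-\frac{1}{2t}\|i_x\omega\|_Q^2 . \]
By (C3) there are a $0$-neighbourhood $U\subeq V$ and a constant $M$ with $\|i_x\omega\|_Q\leq M$ for $x\in U$, so on the open set $\{|z|<1\}\times U\times(\tfrac12,2)$ the right-hand side is bounded below by $-1-M^2$. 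Hence $\cO_\lambda$ is semi-equicontinuous, and it is non-trivial since $z^*=1\neq0$ and $\omega$ is non-degenerate, which makes $v\mapsto\Ad^*(0,v,0)\lambda$ non-constant.

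\textbf{Expected obstacle.} The substantive points are the bookkeeping ones flagged above: using Remark~\ref{rem:switchd} to force some interior point of $B(\cO_\lambda)$ into the region $z^* t>0$ where Lemma~\ref{lem:2.3} applies, and identifying the uniform $Q$-boundedness furnished by Lemma~\ref{lem:2.3}(ii) with condition~(C3) via the Riesz representative $\eta(x)$. Once these are in place, both directions are direct computations with~\eqref{eq:acoad2} and Lemma~\ref{lem:quad}.
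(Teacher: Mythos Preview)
Your proposal is correct and follows essentially the same route as the paper's proof: both directions hinge on Lemma~\ref{lem:2.3} for necessity (after the sign flip of Remark~\ref{rem:switchd}) and on exhibiting the orbit of $\lambda=(1,0,0)$ for sufficiency via~\eqref{eq:acoad3}. Your write-up is in fact slightly more explicit than the paper's in two places: you spell out why some interior point of $B(\cO_\lambda)$ must have $t\neq 0$, and you make the identification $\|\eta(x)\|_Q=\|i_x\omega\|_Q$ that translates Lemma~\ref{lem:2.3}(ii) into~(C3), whereas the paper leaves both to the reader.
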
 

\begin{prf} Suppose that $\lambda = (z^*,\alpha,t^*) \in \g'$ 
has a non-trivial semi-equicontinuous orbit. 
In view of Remark~\ref{rem:orbred}, we have $z^*\not=0$. 

Since the open cone $B(\cO_\lambda)^0$ is non-empty, 
it either contains an element $(z,x,t)$ with 
$t > 0$ or $t< 0$. In the case $z^*t<0$ we replace $D$ by $-D$ 
(cf.\ Remark~\ref{rem:switchd}). Now (C1)-(C3) are consequences 
of Lemma~\ref{lem:2.3}. 

Suppose, conversely, that (C1)-(C3) are satisfied. We claim that 
$\lambda := (1,0,0)$ has a semi-equicontinuous orbit. 
Since $\lambda$ is fixed by the subgroup $\{(0,0)\} \times \R$, we have 
$\cO_\lambda = \Ad^*(0,V,0)\lambda$. Now 
\eqref{eq:acoad3} implies that, for $t > 0$,  
$$ \inf \cO_\lambda(z,x,t) 
=  \inf \la \Ad^*(0,V,0)\lambda, (z,x,t)\ra 
= z - \frac{1}{2t}\|i_x \omega\|_Q^2. $$ 
This is a continuous function on the open half space 
$\heis(V,\omega) \times ]0,\infty[$, so that 
$\cO_\lambda$ is semi-equicontinuous. 
\end{prf}

\begin{rem} \mlabel{rem:3.8} 
In Theorem~\ref{thm:char}, 
the positive definite form $Q$ is $\gamma$-invariant. As a consequence, 
the embedding $\eta \: V \to V_Q$ is $\gamma$-equivariant. 
\end{rem}

We now take a closer look at the set of semi-equicontinuous 
coadjoint orbits for generalized oscillator algebras 
satisfying the conditions (C1)-(C3) in 
Theorem~\ref{thm:char}. We shall see in Proposition~\ref{prop:char2} 
below how these conditions simplify from the perspective of double 
extensions.

\begin{prop} \mlabel{prop:char} 
If {\rm(C1)-(C3)} in {\rm Theorem~\ref{thm:char}} are satisfied, then 
the coadjoint orbit 
of $\lambda = (z^*, \alpha, t^*)$ is non-trivial and  
semi-equicontinuous if and only if 
\begin{description}
\item[\rm(SE1)]  $z^*\not=0$.
\item[\rm(SE2)] $D^*\alpha$ is $Q$-continuous. 
\item[\rm(SE3)] $\gamma(\R)^*\alpha\subeq V'$ is semi-equicontinuous.  
\end{description}
In this case we have 
\begin{equation}
  \label{eq:bouncon}
B(\cO_\lambda)^0 = \R \times B(\gamma(\R)^*\alpha)^0 \times ]0,\infty[z^*. 
\end{equation}
\end{prop}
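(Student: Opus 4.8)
The plan is to characterize semi-equicontinuity of $\cO_\lambda$ by analyzing the explicit formula \eqref{eq:acoad2} for the coadjoint action, exploiting the fact that under (C1)--(C3) the quadratic term $\frac{tz^*}{2}\omega(Dv,v) = \frac{tz^*}{2}Q(v,v)$ is controlled. First I would prove the ``only if'' direction. Assume $\cO_\lambda$ is non-trivial and semi-equicontinuous. By Remark~\ref{rem:orbred} we immediately get (SE1), namely $z^*\neq 0$; after possibly replacing $D$ by $-D$ (Remark~\ref{rem:switchd}) we may assume the open cone $B(\cO_\lambda)^0$ contains a point $(z,x,t)$ with $z^*t>0$. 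Then Lemma~\ref{lem:2.3}(ii) gives that $D^*\alpha$ is $Q$-continuous, which is (SE2). For (SE3) I would observe that $\cO_\lambda \supeq \Ad^*(0,0,\R)\lambda$, and since $\Ad^*(0,0,s)$ acts on the $\alpha$-component by $\gamma(s)^*$ (with $\gamma(s)=e^{sD}$ the induced flow on $V'$), semi-equicontinuity of $\cO_\lambda$ forces semi-equicontinuity of the projection $\gamma(\R)^*\alpha \subeq V'$ onto the $V'$-slot.

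For the ``if'' direction I would assume (SE1)--(SE3) and reconstruct $B(\cO_\lambda)^0$ explicitly, which simultaneously establishes \eqref{eq:bouncon}. The key computation is to apply the full orbit $\Ad^*(0,V,0)\Ad^*(0,0,\R)\lambda$ and minimize the pairing against $(z,x,t)$. For fixed $s$, replacing $\alpha$ by $\gamma(s)^*\alpha$ and using formula \eqref{eq:acoad3} (valid because (C1) makes $Q$ positive definite and we take $z^*t>0$), the inner infimum over $V$ yields
\begin{equation}
\inf_{v}\la \Ad^*(0,v,s)\lambda,(z,x,t)\ra = z^*z + (\gamma(s)^*\alpha)(x) + tt^* - \frac{1}{2tz^*}\|t D^*\gamma(s)^*\alpha + z^* i_x\omega\|_Q^2.
\end{equation}
The term $\|i_x\omega\|_Q$ is uniformly bounded near any fixed $x$ by (C2)--(C3), and $D^*\gamma(s)^*\alpha = \gamma(s)^* D^*\alpha$ is uniformly $Q$-bounded in $s$ since $\gamma$ acts by $Q$-isometries (Remark~\ref{rem:3.8}) and $D^*\alpha$ is $Q$-continuous by (SE2). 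Hence the whole infimum stays finite precisely when $\inf_s (\gamma(s)^*\alpha)(x) > -\infty$, i.e.\ when $x \in B(\gamma(\R)^*\alpha)$, and when $tz^*>0$; the $z$-component is free. This is exactly \eqref{eq:bouncon}, and since (SE3) guarantees $B(\gamma(\R)^*\alpha)^0 \neq \eset$, the set $B(\cO_\lambda)^0$ has interior points, so $\cO_\lambda$ is semi-equicontinuous.

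The main obstacle I anticipate is the uniformity of the $Q$-bounds across the one-parameter flow: I must verify that $\gamma(s)^*$ preserves $\|\cdot\|_Q$ (so that $\sup_s \|\gamma(s)^* D^*\alpha\|_Q = \|D^*\alpha\|_Q < \infty$), which relies on the $\gamma$-invariance of $Q$ noted in Remark~\ref{rem:3.8}, and I must carefully interchange the two infima (over $v\in V$ and over $s\in\R$) to isolate the condition on $x$. A secondary technical point is that $\gamma(s)^*\alpha$ need not itself be $Q$-continuous for individual $s$ unless $\alpha$ is, but the infimum formula only requires $D^*\gamma(s)^*\alpha$ to be $Q$-bounded, which follows from (SE2) and $\gamma$-equivariance; the functional $\gamma(s)^*\alpha$ itself enters only through the pairing $(\gamma(s)^*\alpha)(x)$, where (SE3) supplies exactly the needed control. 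Establishing \eqref{eq:bouncon} as an equality of open cones (rather than merely an inclusion) requires checking both that every listed triple lies in $B(\cO_\lambda)^0$ and, conversely, that no triple outside the product can lie there, which the explicit infimum formula delivers once the uniform bounds are in place.
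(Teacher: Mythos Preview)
Your approach matches the paper's: necessity via Remark~\ref{rem:orbred} and Lemma~\ref{lem:2.3}, sufficiency and \eqref{eq:bouncon} via the infimum formula \eqref{eq:acoad3} combined with the $Q$-isometry of $\gamma(\R)$. One correction is needed: under the standing hypotheses (C1)--(C3) you must \emph{not} invoke Remark~\ref{rem:switchd} to replace $D$ by $-D$, since this would flip the sign of $Q$ and destroy (C1). The correct argument (which the paper gives) is that the positivity of $Q$ in \eqref{eq:acoad2} forces $z^*t\geq 0$ for every $(z,x,t)\in B(\cO_\lambda)$, so interior points of this cone already satisfy $z^*t>0$ without any sign change. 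Your shortcut for (SE3)---projecting the suborbit $\Ad^*(0,0,\R)\lambda=\{(z^*,\gamma(s)^*\alpha,t^*):s\in\R\}\subeq\cO_\lambda$ to the $V'$-slot and using that the other two components are constant---is valid and more direct than the paper's route, which extracts (SE3) only after the full infimum computation \eqref{eq:acoad3}; the paper's computation is, however, still needed for the converse direction and for \eqref{eq:bouncon}, where your parametrization by $s$ in $\Ad^*(0,v,s)$ and the paper's dual parametrization via $(z,\gamma(\R)x,t)$ are equivalent by the semidirect product structure.
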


\begin{prf} Suppose first that $\cO_\lambda$ is non-trivial 
and semi-equicontinuous. Then Remark~\ref{rem:orbred} implies that 
$z^*\not=0$. With \eqref{eq:acoad2} 
and Lemma~\ref{lem:quad} we see that 
$z^* t \geq 0$ is a necessary 
condition for $\Ad^*(0,V,0)(z,x,t)$ to be bounded from below. This implies the 
existence of an element $(z,x,t) \in B(\cO_\lambda)^0$ with 
$z^*t > 0$. Now Lemma~\ref{lem:2.3}(ii) implies that $D^*\alpha$ 
is $Q$-continuous. In view of (C2) and (C3), 
\eqref{eq:acoad3} now leads to 
$$ B(\Ad^*(0,V,0)\lambda)^0 = \heis(V,\omega) \times ]0,\infty[ z^*.$$ 

Next we note that 
$$ \cO_\lambda(z,x,t) 
= \la \Ad^*(0,V,0)\lambda, (z,\gamma(\R)x,t) \ra. $$
Since $\gamma(\R) \subeq \GL(V)$ preserves $\omega$ and $D$, it is 
$Q$-isometric. Therefore 
$$\|i_{\gamma(s)x}\omega\|_Q = \|i_x\omega\|_Q \quad \mbox{ for  each } 
s \in \R,$$ 
and this implies that 
$\|z^* i_{\gamma(s)x} \omega + t D^*\alpha\|_Q^2$ 
is uniformly bounded in $s \in \R$ and locally uniformly in $x$. 
Hence the rightmost term in \eqref{eq:acoad3} is uniformly 
bounded if $x$ runs through $\gamma(\R)U_x$ for some neighborhood 
$U_x$ of $x$. Therefore $(z,x,t) \in B(\cO_\lambda)^0$ 
implies that 
$$ y \mapsto \inf \alpha(\gamma(\R)y) $$
is bounded on some neighborhood of $x$, 
which means that $\gamma(\R)^*\alpha$ is semi-equicontinuous. 
This proves the necessity of (SE1)-(SE3). 

Suppose, conversely, that (SE1)-(SE3) are satisfied. 
Then (SE2) and (SE3) imply that in \eqref{eq:acoad3} 
the right hand side is uniformly bounded below on a 
$\gamma(\R)$-invariant neighborhood of $(z,x,t)$, and this means that 
$\inf \cO_\lambda(z,x,t)$ is bounded from below on a neighborhood 
of $(z,x,t)$, which shows that $\cO_\lambda$ is semi-equi\-con\-tinuous.
\end{prf}

\section{The connection with double extensions} 
\mlabel{sec:4}

Let $(V,\kappa)$ be a locally convex euclidean space and 
$D \: V \to V$ be a skew-symmetric endomorphism of $V$, $D\neq0$.
Then $\omega_D(x,y) := \kappa(Dx,y)$ defines a skew-symmetric 
bilinear form on $V$, so that we obtain a 
Lie algebra 
\[  \g(V,\kappa,D) := (\R \oplus_{\omega_D} V) \rtimes_D \R \] 
with the bracket 
\[ [(z,v,t), (z',v',t')] 
= (\omega_D(v,v'), tDv' - t'Dv,0)
= (\kappa(Dv,v'), tDv' - t'Dv,0).\] 
This Lie algebra $\g(V,\kappa, D)$ carries the continuous Lorentzian form 
\[ \kappa_\g((z,x,t),(z',x',t')) := zt' + z't + \kappa(x,x'), \] 
which is easily seen to be invariant under the adjoint action 
of the Lie algebra on itself, hence also under the adjoint action 
of any corresponding connected Lie group. 

\begin{rem} If $\g := \g(V,\kappa, D)$ is a double extension as above, then a slight modification 
of the proof of \cite[Thm.~VI.2.7]{Ne06} 
implies that the existence of a Lie group $G$ with 
Lie algebra $\g$ is equivalent to the integrability of 
$D$ to a smooth one-parameter group 
$\gamma_D$ on $V$. 
\end{rem}

From now on we assume that $D$ integrates to a smooth orthogonal one-parameter group $\gamma$ on $V$. Since $\kappa_\g$ is a continuous invariant Lorentzian form, it follows that
$$ W := \{ (z,x,t) \: z > 0, \kappa_\g((z,x,t), (z,x,t)) < 0\} $$ 
is an open invariant convex cone in $\g(V,\kappa,D)$, so that its  dual cone $W^\star$ is a semi-equicontinuous subset which 
contains for $V \not=\{0\}$ non-trivial coadjoint orbits, such as the orbit of 
the functional ${\lambda(z,x,t):= z}$ (Remark~\ref{rem:2.2}(c)). 

In view of this observation, the 
following characterization captures the essence of the 
existence of non-trivial semi-equicontinuous coadjoint orbits 
for oscillator algebras $\g(V,\omega,D)$. For us, this leads to a 
change of 
perspective with respect to the initial data from the symplectic 
data $(V,\omega, D)$ to the euclidean data $(V,\kappa,D)$, 
which is better adapted to the discussion of semi-equicontinuous 
coadjoint orbits.

\begin{thm} \mlabel{thm:4.2} A generalized oscillator algebra 
$\g(V,\omega, D)$ with $V \not=\{0\}$ 
contains non-trivial semi-equicontinuous coadjoint 
orbits if and only if it is isomorphic to a double extension 
$\g(V,\kappa,D)$ defined by a continuous skew-symmetric operator $D$ 
on a euclidean locally convex space $(V,\kappa)$. 
\end{thm}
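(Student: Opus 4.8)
The plan is to prove the two implications separately, citing the ``if'' direction and concentrating on the ``only if'' direction. For the \emph{if} direction, suppose $\g(V,\omega,D)$ is isomorphic to a double extension $\g(V,\kappa,D)$. Then the discussion preceding the theorem applies verbatim: the continuous invariant Lorentzian form $\kappa_\g$ produces the open invariant convex cone $W=\{(z,x,t)\:z>0,\ \kappa_\g((z,x,t),(z,x,t))<0\}$, whose dual cone $W^\star$ is semi-equicontinuous by Remark~\ref{rem:2.2}(c) and, since $V\neq\{0\}$, contains the non-trivial coadjoint orbit of $\lambda(z,x,t)=z$. I would simply invoke this.

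For the \emph{only if} direction I would start from Theorem~\ref{thm:char}: after possibly replacing $D$ by $-D$ (Remark~\ref{rem:switchd}), the existence of a non-trivial semi-equicontinuous orbit yields conditions (C1)--(C3). The form $Q(x,y):=\omega(Dx,y)$ is then continuous and positive definite, and I would first record that $D$ is skew-symmetric with respect to $Q$: since $D\in\sp(V,\omega)$ we have $\omega(Dx,y)=-\omega(x,Dy)$, whence
\[ Q(Dx,y)=\omega(D^2x,y)=-\omega(Dx,Dy)=-Q(x,Dy). \]
Thus $(V,Q)$ is a locally convex euclidean space on which $D$ acts as a continuous skew-symmetric operator (integrating to the $Q$-orthogonal group $\gamma$ by Remark~\ref{rem:3.8}), so $\kappa:=Q$ is the natural euclidean datum and $\g(V,Q,D)$ is a bona fide double extension.

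It remains to identify $\g(V,\omega,D)$ with this double extension. The key computation is that the map
\[ \psi\:\g(V,Q,D)\to\g(V,\omega,D),\qquad \psi(z,w,t):=(-z,\,Dw,\,t), \]
is a Lie algebra homomorphism: on the $V$-component both brackets give $tD^2w'-t'D^2w$, while on the centre one checks $-Q(Dw,w')=-\omega(D^2w,w')=\omega(Dw,Dw')$, matching the $\omega$-bracket of $Dw$ and $Dw'$. Since $Q$ is definite, $D$ is injective, so $\psi$ is injective with image the ideal $\R\times D(V)\times\R$; composing with the sign flip of Remark~\ref{rem:switchd} then realises $\g(V,\omega,D)$ as a double extension, \emph{provided} $\psi$ is surjective.

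The surjectivity of $\psi$---equivalently, that $D(V)=V$---is the point I expect to be the main obstacle, since a priori $D$ is only injective. This is exactly where (C2)--(C3) must enter: they furnish the continuous map $\eta\:V\to V_Q$ with $Q(\eta(x),y)=\omega(x,y)$, and the identity $Q(\eta(Dx),y)=\omega(Dx,y)=Q(x,y)$ shows $\eta\circ D=\id_V$, so $\eta$ is a continuous left inverse for $D$. I would use this to argue that $D$ is a topological isomorphism onto a dense range and, passing to the completion $V_Q$ if necessary, that the euclidean form determined on $D(V)$ by $\kappa(Dx,Dy)=Q(x,y)$ extends continuously and positive-definitely to all of $V$, feeding this back to make $\psi$ an isomorphism. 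The delicate part is controlling the interplay between the original locally convex topology on $V$ and the Hilbert topology of $V_Q$, which is precisely what the continuity condition (C3) is designed to govern.
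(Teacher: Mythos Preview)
Your ``if'' direction is fine and matches the paper. The ``only if'' direction has a genuine gap, and you have correctly identified where it lies: your map $\psi(z,w,t)=(-z,Dw,t)$ is an isomorphism only if $D(V)=V$, and this simply fails in general (see Example~\ref{ex:no-eigenvalue}). The relation $\eta\circ D=\id_V$ gives $D$ a left inverse into $V_Q$, not into $V$, so it does not make $D$ surjective on~$V$; passing to the completion $V_Q$ would change the underlying locally convex space and hence the Lie algebra, which is not what the statement asserts.

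The fix is to change the euclidean form, not the isomorphism. Instead of $\kappa=Q$, set
\[
\kappa(x,y):=Q(\eta(x),\eta(y)),
\]
which is continuous on $V$ by (C3) and positive definite since $\eta$ is injective (it has $D$ as a right inverse). Your own computation $\eta\circ D=\id_V$ then gives $\kappa(Dx,Dy)=Q(x,y)$, and more importantly
\[
\omega_D(x,y)=\kappa(Dx,y)=Q(\eta(Dx),\eta(y))=Q(x,\eta(y))=\omega(y,x)=-\omega(x,y).
\]
So $\omega_D=-\omega$, and the map $(z,x,t)\mapsto(-z,x,t)$ is already an isomorphism $\g(V,\kappa,D)\to\g(V,\omega,D)$, with no need for $D$ to be bijective. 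You were almost there in your last paragraph when you proposed the form determined by $\kappa(Dx,Dy)=Q(x,y)$ on $D(V)$; the point you missed is that this form has the global expression $\kappa=Q(\eta\cdot,\eta\cdot)$ on all of $V$, and that with this choice the isomorphism collapses to a sign flip on the centre.
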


\begin{prf} If $\g := \g(V,\omega, D)$ contains non-trivial 
semi-equicontinuous coadjoint 
orbits, then by Remark~\ref{rem:switchd} we may assume that (C1)-(C3) from Theorem~\ref{thm:char}  
are satisfied. Hence the embedding $\eta \: V \to V_Q$ leads to a 
$\gamma(\R)$-invariant continuous scalar product on~$V$: 
$$ \kappa(x,y) := Q(\eta(x), \eta(y)) $$  
satisfying 
$\kappa(x,x) = \|i_x\omega\|_Q^2 = \|\eta(x)\|_Q^2$ 
(Remark~\ref{rem:3.8}). 

We may consider $\eta$ as an unbounded skew-symmetric operator 
on the real Hilbert space $V_Q$ with domain 
$\cD = V$. It satisfies 
$$ Q(\eta(Dx),y) = \omega(Dx,y) = Q(x,y), \quad x,y \in V, $$
so that $\eta \circ D = \id_V$ and 
\begin{equation}
  \label{eq:kappa0}
\kappa(Dx,Dy) = Q(x,y) \quad \mbox{ for } \quad x,y \in V.   
\end{equation}
For $x,y \in V$ we further obtain 
\begin{equation}
  \label{eq:4.1}
\omega_D(x,y) := \kappa(Dx,y) = Q(\eta(Dx), \eta(y)) = Q(x, \eta(y)) 
= Q(\eta(y),x) = \omega(y,x). 
\end{equation}
This implies that the map 
$$ \g(V,\kappa,D) := (\R \oplus_{\omega_D} V) \rtimes_D \R \to 
\g(V,\omega, D), \quad 
(z,x,t) \mapsto (-z,x,t) $$
is an isomorphism of topological Lie algebras. 

If, conversely, $D$ is a skew-symmetric operator on the euclidean 
locally convex space, then we have already seen above that 
$\g(V,\kappa,D)$ has non-trivial coadjoint orbits. 
\end{prf}

\begin{rem} \mlabel{rem:4.3}
For a generalized oscillator algebra 
$\g(V,\omega, D)$, $V \not=\{0\}$ 
with non-trivial semi-equicontinuous coadjoint 
orbits we have constructed in the proof of the preceding proposition 
a continuous real scalar product $\kappa$ on $V$ which, possibly after replacing $D$ by $-D$, satisfies $\omega(x,y)=-\kappa(Dx,y)$ for $x,y\in V$. This $\kappa$ has the nice property that $D(V)$ is dense in $V_\kappa$. To see this choose for a $v\in V$ a sequence $v_n\in V$ such that $v_n$ converges to $\eta(v)\in V_Q$ with respect to the $Q$-norm. 
Since $v_n=\eta(D v_n)$, it follows that 
\[ \|D v_n - v\|_\kappa = \|\eta(Dv_n) - \eta(v)\|_Q \to 0.\] 
\end{rem}

From now on we suppose $\g(V,\omega,D)\cong\g(V,\kappa,D)$. In particular $D$ is injective.
The following proposition characterizes non-trivial semi-equicontinuous 
orbits, according to our new perspective, 
in terms of the data $(V,\kappa, D)$. 

\begin{prop} \mlabel{prop:char2} 
The coadjoint orbit of $\lambda = (z^*, \alpha, t^*)$ in 
$\g(V,\kappa,D)$ is non-trivial and semi-equicontinuous if and only 
if $z^* \not=0$ and $\alpha\res_{D(V)}$ is $\kappa$-bounded. 
If $D(V)$ is dense in $V$, then the 
condition that $\alpha\res_{D(V)}$ is $\kappa$-bounded is 
equivalent to the $\kappa$-boundedness of $\alpha$. 
\end{prop}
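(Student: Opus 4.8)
The plan is to translate the conditions characterizing non-trivial semi-equicontinuous orbits from the symplectic picture $(V,\omega,D)$, as given in Proposition~\ref{prop:char}, into the euclidean picture $(V,\kappa,D)$. Recall from the proof of Theorem~\ref{thm:4.2} that the isomorphism $\g(V,\kappa,D)\to\g(V,\omega,D)$ sends $(z,x,t)\mapsto(-z,x,t)$ and that $\omega(x,y)=-\kappa(Dx,y)$, with $Q(x,y)=\omega(Dx,y)=\kappa(D^2x,y)$ the form from Theorem~\ref{thm:char}. So the first step is to record how the three conditions (SE1)--(SE3) of Proposition~\ref{prop:char} read in the new coordinates. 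Since the isomorphism only changes the sign of the central coordinate, (SE1), namely $z^*\neq0$, is unaffected.

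Next I would verify that (SE2) and (SE3) together collapse to the single clean condition that $\alpha\res_{D(V)}$ be $\kappa$-bounded. The key computational identity is \eqref{eq:kappa0}, $\kappa(Dx,Dy)=Q(x,y)$, which says that $D\colon V\to V_\kappa$ is, up to the completions, an isometry onto $D(V)$ between the $Q$- and $\kappa$-norms; more precisely $\|Dx\|_\kappa=\|x\|_Q$. Using this, I would show (SE2), the $Q$-continuity of $D^*\alpha$, is equivalent to the $\kappa$-boundedness of $\alpha$ on the range $D(V)$: indeed $(D^*\alpha)(x)=\alpha(Dx)$, so controlling $D^*\alpha$ in the $Q$-norm is exactly controlling $\alpha\circ D$ in the $Q$-norm, which by the isometry is controlling $\alpha$ in the $\kappa$-norm on $D(V)$. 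The main obstacle I anticipate is checking that (SE3), semi-equicontinuity of the orbit $\gamma(\R)^*\alpha\subeq V'$, is already subsumed once $\alpha\res_{D(V)}$ is $\kappa$-bounded. Here I would use that $\gamma$ is $\kappa$-orthogonal (Remark~\ref{rem:3.8}), so that $\|\gamma(s)^*\alpha\|_\kappa$ is constant in $s$ on the relevant range; thus a uniform $\kappa$-bound on $\alpha$ automatically yields the equicontinuity of the whole $\gamma(\R)$-orbit of $\alpha$ restricted to $D(V)$, and conversely semi-equicontinuity forces such a bound. This interplay between the orthogonality of $\gamma$ and the isometry property of $D$ is where the argument really lives.

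For the final sentence of the statement, I would invoke density: if $D(V)$ is dense in $V_\kappa$, then a $\kappa$-bounded linear functional on the dense subspace $D(V)$ extends uniquely to a $\kappa$-continuous functional on all of $V$, so $\kappa$-boundedness of $\alpha\res_{D(V)}$ is equivalent to $\kappa$-boundedness of $\alpha$ itself. I would note that the hypothesis ``$D(V)$ dense in $V$'' should be read as density in $V_\kappa$, which is precisely the property established in Remark~\ref{rem:4.3} for the scalar product $\kappa$ constructed there; hence in the situation of interest this density comes for free and the cleaner criterion, $\kappa$-boundedness of $\alpha$, applies.

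The step I expect to be the genuine crux is the reduction of the two conditions (SE2) and (SE3) to the one condition on $\alpha\res_{D(V)}$, because it requires simultaneously exploiting two different isometry-type facts: the $Q$-to-$\kappa$ isometry of $D$ coming from \eqref{eq:kappa0}, and the $\kappa$-orthogonality of the flow $\gamma$. Once those are lined up, the equivalence is essentially bookkeeping, but keeping straight which norm ($Q$ versus $\kappa$) governs which functional ($D^*\alpha$ versus $\alpha$) is where a careless argument could go wrong.
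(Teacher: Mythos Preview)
Your translation of (SE1) and (SE2) is correct and matches the paper's argument: the identity $\|Dx\|_\kappa=\|x\|_Q$ from \eqref{eq:kappa0} does convert $Q$-continuity of $D^*\alpha$ into $\kappa$-boundedness of $\alpha\res_{D(V)}$. The gap is in your treatment of (SE3). You argue that, since $\gamma$ is $\kappa$-orthogonal, ``$\|\gamma(s)^*\alpha\|_\kappa$ is constant in $s$ on the relevant range,'' and hence the $\gamma$-orbit of $\alpha$ is equicontinuous \emph{when restricted to $D(V)$}. But (SE3) requires semi-equicontinuity of $\gamma(\R)^*\alpha$ as a subset of $V'$, i.e., uniform boundedness below on some open set in $V$, not merely on $D(V)$. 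Since $\alpha$ need not be $\kappa$-bounded on all of $V$, controlling the orbit on $D(V)$ is not enough. The paper closes this gap with an idea you have not mentioned: from the $\kappa$-boundedness of $\alpha\res_{D(V)}$ one produces a $\kappa$-bounded functional $\tilde\alpha\in V'$ with $D^*\tilde\alpha=D^*\alpha$; then $\alpha-\tilde\alpha\in\ker D^*$ is a $\gamma$-fixed point, so $\gamma(\R)^*\alpha=\gamma(\R)^*\tilde\alpha+(\alpha-\tilde\alpha)$. Now $\gamma(\R)^*\tilde\alpha$ is $\kappa$-equicontinuous (by orthogonality of $\gamma$ and the fact that $\tilde\alpha$ is $\kappa$-bounded on \emph{all} of $V$), hence equicontinuous in $V'$, and translating by the fixed functional preserves this. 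This decomposition into a $\kappa$-bounded piece plus a $\gamma$-fixed piece is the actual crux, and without it your argument does not establish (SE3).

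A second, smaller issue: you propose to read ``$D(V)$ dense in $V$'' as density in $V_\kappa$. That is not the intended hypothesis, and density in $V_\kappa$ alone does not suffice. The point is that $\alpha$ is continuous for the \emph{original} locally convex topology on $V$, and $\|\cdot\|_\kappa$ is likewise continuous; so if $|\alpha(y)|\leq C\|y\|_\kappa$ holds on $D(V)$ and $D(V)$ is dense in $V$ for the original topology, the inequality passes to all of $V$ by continuity. If $D(V)$ is only $\kappa$-dense, the unique $\kappa$-continuous extension of $\alpha\res_{D(V)}$ need not agree with $\alpha$ on $V$; the paper's Remark following the proposition makes exactly this distinction.
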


\begin{prf} We have to reformulate the conditions 
(SE1-3) from Proposition~\ref{prop:char} in the new context. 
Condition (SE1) that $z^*\not=0$ remains the same. 

For (SE2), we observe that for $\alpha \in V'$ we have 
$(D^*\alpha)(v) = \alpha(Dv)$, and (SE2) is equivalent to the existence 
of some $C > 0$ with 
\begin{equation}
  \label{eq:esti}
|\alpha(Dx)| \leq C \|x\|_Q = C \|Dx\|_{\kappa} 
\end{equation}
(cf.\ \eqref{eq:kappa0}). 
This in turn is equivalent to the $\kappa$-boundedness of 
$\alpha\res_{D(V)}$, which in turn is 
equivalent to the existence of some 
$\tilde\alpha$ in the $\kappa$-dual of $V$ such that 
$D^*(\tilde\alpha - \alpha) = 0$.  This condition implies that 
$\tilde\alpha - \alpha$ is fixed by $\gamma(\R)$, so that 
we have for each $s \in \R$ the relation 
$\gamma(s)^*\tilde\alpha - \tilde\alpha = \gamma(s)^*\alpha - \alpha,$
and thus 
$$ \gamma(\R)^*\tilde\alpha - \tilde\alpha = \gamma(\R)^*\alpha - \alpha.$$
It follows in particular that $\gamma(\R)^*\alpha$ is 
semi-equicontinuous if and only if $\gamma(\R)^*\tilde\alpha$ has 
this property. But $\gamma(\R)$ consists of $\kappa$-isometries, so that  
$\gamma(\R)^*\tilde\alpha$ is equicontinuous in the $\kappa$-dual 
of $V$, hence also equicontinuous in $V'$. Therefore 
(SE3) is a consequence of (SE2). 
\end{prf}

\begin{rem} (a) In the preceding proof, the difference 
$\tilde\alpha - \alpha \in [\g,\g]^\bot \subeq \g'$ 
is a fixed point of the coadjoint 
action. Therefore 
$$ \cO_{\lambda + \tilde \alpha - \alpha}
= \tilde \alpha - \alpha + \cO_{\lambda}. $$
From the preceding argument we obtain the inclusion 
$$ \g'_{\rm seq} \subeq (\R \times V_{\kappa}' \times \R) 
+ [\g,\g]^\bot, $$
i.e., by a shift with a fixed point, every semi-equicontinuous coadjoint 
orbit can be moved into the subspace 
$\R \times V_{\kappa}' \times \R \subeq \g'$. In particular, they 
are contained in the dual space of any topological Lie algebra 
$\tilde\g$ between $\g$ and its $\kappa$-completion 
$\R \oplus  V_\kappa \oplus  \R$. 

(b) The proof of the preceding proposition implies in particular 
that condition (SE3) in Proposition~\ref{prop:char} is redundant. 
Assuming (C1)-(C3), it follows from (SE1) and (SE2). 

(c) If $D(V)$ is dense, then \eqref{eq:esti} implies 
that $\|\alpha\|_\kappa \leq C$. 
In particular, $\alpha\res_{D(V)}$ is $\kappa$-bounded if and only 
if $\|\alpha\|_\kappa < \infty$. 

In the situation of Remark~\ref{rem:4.3} we have seen that $D(V)$ is dense in 
$V_{\kappa}$, hence in $V$ with respect to $\kappa$. 
Therefore the $\kappa$-boundedness of 
$\alpha\res_{D(V)}$ implies that it has 
exactly one $\kappa$-continuous extension to $V$. 
Whether this extension coincides with $\alpha$ or not depends on 
the density of $D(V)$ in $V$. If $D(V)$ is not dense in $V$, 
then $\alpha\res_{D(V)}$ has many different continuous extensions 
to~$V$, some of which are not $\kappa$-bounded 
(cf.\ Example~\ref{ex:no-eigenvalue} below). Moreover in this case it may also happen
that there exists a different $\kappa_1\neq \kappa$ on $V$ such that the Lie algebras $\g(V,\kappa,D)$ and $\g(V,\kappa_1,D)$ are the same (cf.\ Example~\ref{ex:6.3}(d) below).
\end{rem}

\begin{ex} \mlabel{ex:no-eigenvalue} 
(a) For $\cH = L^2([0,1],\C)$, $V = C([0,1],\C)$ and 
\[ (\gamma(t)f)(x) = e^{itx} f(x) \] 
we obtain $(Df)(x) = ixf(x)$. Clearly, $\ker D =0$ and 
$\oline{\im(D)} = (\ker D)^\bot =\cH$. 
However, $D(V) \subeq \{ f \in V \: f(0) = 0\}$ is not dense in 
$V$ and $\ev_0 \: V \to \C$ is a $\gamma$-invariant continuous 
linear functional. 

In this case 
\[ \omega(f,g) = \la Df, g \ra = i \int_0^1 x f(x) \oline{g(x)}\, dx\]  
and 
\[ \omega(Df,g) = \la D^2f, g \ra = - \int_0^1 x^2 f(x) \oline{g(x)}\, dx\]  
is negative definite. 

(b) On $\R$ we consider the measure 
$\mu = \delta_0 + \chi_{[0,\infty[} \cdot dx,$
so that 
$$ L^2(\R,\mu) = \C \oplus L^2(\R_+,dx). $$ 
On this space we consider the skew-adjoint operator 
$$ \oline D f(x) := i x f(x) $$
and the domain 
$$ V := C_c^\infty([0,\infty[) 
= \{ (f(0), f) \in \C \times L^2(\R_+) \: f \in C_c^\infty([0,\infty[)\}. $$
Then $V$ is dense in $L^2(\R,\mu)$ and $V$ is invariant 
under the corresponding unitary one-parameter group 
$$(\gamma(t)f)(x) = e^{itx}f(x) $$
and $\oline D$ itself. Therefore $V$ is a core of $\oline D$, 
i.e., $\oline D$ is the closure of 
$D := \oline D\res_{V}$. Here $0$ is an eigenvalue of $\oline D$, 
but not of $D$. 
\end{ex}

\begin{rem}
 Since $\gamma(\R) \subeq \GL(V)$ preserves the scalar product 
$\kappa$, it defines an orthogonal one-parameter group 
$\gamma \: \R \to \OO(V_{\kappa})$ 
whose infinitesimal generator $\gamma'(0)$ extends $D$. Since 
$V \subeq V_{\kappa}$ is a $\gamma(\R)$-invariant dense subspace, 
\cite[Prop.~II.1.7]{EN00} implies that $\gamma'(0) = \oline D$ 
is the closure of $D$ in $V_{\kappa}$. 
As $V$ is invariant under $D$, we clearly 
have $V \subeq \cD^\infty(\oline D) = \bigcap_{n = 1}^\infty \cD(\oline D^n)$, 
the space of smooth vectors for $\oline D$ resp.,~$\gamma$. 
\end{rem}

\begin{prop} \mlabel{prop:real-stone} {\rm(Real version of Stone's Theorem)} 
Let $\cH$ be a real Hilbert space, 
$\gamma \: \R \to \OO(\cH)$ be a strongly continuous 
one-parameter group of isometries of $\cH$, and  
$\cD \subeq \cH$ be a dense subspace invariant under 
$\gamma(\R)$ and consisting of $C^1$-vectors. 
Then the operator 
$$ A\: \cD \to \cH, \quad A v := \gamma'(0)v \quad  \mbox{ satisfies } \quad 
\oline A = \gamma'(0) = - A^\top. $$
\end{prop}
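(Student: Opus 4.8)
The plan is to split the statement into the identity $\oline A = \gamma'(0)$, which is a core statement, and the skew-adjointness $\gamma'(0) = -A^\top$, which carries the genuine Stone-theoretic content. Since $\cD$ is dense, $A$ is densely defined and its transpose $A^\top$ is meaningful. A direct computation, using that each $\gamma(t)$ is orthogonal (so $\gamma(t)^\top = \gamma(t)^{-1} = \gamma(-t)$) and differentiating $\la \gamma(t)v, w \ra = \la v, \gamma(-t)w \ra$ at $t = 0$, shows $\la Av, w \ra = -\la v, Aw \ra$ for $v,w \in \cD$; hence $A$ is skew-symmetric, i.e.\ $A \subseteq -A^\top$, and in particular $A$ is closable. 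The first equality $\oline A = \gamma'(0)$ then follows exactly as in the remark preceding the proposition: $\cD$ is a dense, $\gamma(\R)$-invariant subspace contained in the domain of the generator $\gamma'(0)$, so \cite[Prop.~II.1.7]{EN00} gives that $\cD$ is a core for $\gamma'(0)$, which is precisely $\oline A = \gamma'(0)$.

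The remaining and essential point is $\gamma'(0) = -A^\top$, i.e.\ that the generator is skew-adjoint; purely algebraic manipulation of adjoints only reproduces the inclusion $\oline A \subseteq -A^\top$, and the reverse inclusion uses that $\gamma$ is a group and not merely a semigroup. I would obtain it by complexification. Let $\cH_\C = \cH \otimes_\R \C$ with conjugation $\sigma$, and let $\gamma_\C(t)$ be the $\C$-linear extension of $\gamma(t)$. Each $\gamma_\C(t)$ is unitary and $\gamma_\C$ is strongly continuous, so by the classical (complex) Stone theorem its generator $B := \gamma_\C'(0)$ is skew-adjoint, $B^* = -B$. The complexified domain $\cD_\C := \cD + i\cD$ is dense, $\gamma_\C(\R)$-invariant and consists of $C^1$-vectors, so by \cite[Prop.~II.1.7]{EN00} it is a core for $B$; thus $B = \oline{A_\C}$, where $A_\C$ denotes the complexification of $A$.

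Finally I would descend to the real picture. Since $\gamma(t)$ is real, $\gamma_\C(t)$ commutes with $\sigma$, hence so does $B$; therefore $B$ is the complexification of its restriction to the real part, which is $\gamma'(0) = \oline A$, giving $B = (\oline A)_\C$. Under complexification the complex Hilbert-space adjoint corresponds to the real transpose, $(\oline A)_\C^* = ((\oline A)^\top)_\C$, so $B^* = -B$ becomes $(\oline A)^\top = -\oline A$. Using that the transpose of an operator coincides with that of its closure, $(\oline A)^\top = A^\top$, this reads $A^\top = -\oline A$, i.e.\ $\oline A = -A^\top$, completing the proof. The one point that requires care — and which I regard as the main obstacle — is the bookkeeping of the complexification: checking that orthogonal operators extend to unitaries with the generator complexifying correctly (so that $B$ commutes with $\sigma$ and $B|_\cH = \gamma'(0)$), and that the complex adjoint $B^*$ matches the real transpose $(\oline A)^\top$ under the identification $\cH_\C = \cH \oplus i\cH$.
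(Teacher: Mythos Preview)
Your argument is correct and follows essentially the same route as the paper: use the core criterion \cite[Prop.~II.1.7]{EN00} to get $\oline A = \gamma'(0)$, complexify and apply the classical Stone theorem to obtain skew-adjointness of $\gamma_\C'(0)$, then descend to $\gamma'(0) = -\gamma'(0)^\top = -A^\top$. The paper is slightly more economical in that it identifies $\gamma_\C'(0)$ directly as the complex-linear extension of $\gamma'(0)$ without passing through the auxiliary core $\cD_\C$, but the substance is identical.
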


\begin{prf} \cite[Prop.~II.1.7]{EN00} implies that 
$\gamma'(0) = \oline A$. Further, 
$A \subeq -A^\top$ follows from the fact that $\gamma(\R)$ 
consists of isometries. From the closedness of 
$A^\top$ (\cite[Thm.~13.9]{Ru73}), 
we further derive that $\oline A \subeq -A^\top = - \oline A^\top$. 

Applying Stone's Theorem (\cite[Thm.~13.37]{Ru73}) 
to the complex linear extension 
$\gamma_\C(t)$ of the operators $\gamma(t)$ to the complex 
Hilbert space $\cH_\C$, we derive that 
$\gamma_\C'(0)$ is skew-adjoint. Clearly, $\gamma_\C'(0)$ 
is the complex linear extension of the operator 
$\gamma'(0)$ on $\cH$, which implies that 
$\gamma'(0) = - \gamma'(0)^\top$. 
In particular, the domain of $A^\top$ is not larger than 
$\cD(\oline A) = \cD(\gamma'(0))$, and this leads to 
$A^\top = \oline A^\top = - \oline A$. 
\end{prf}

\begin{rem} \mlabel{rem:4.9} 
We take a closer look at the continuous injection 
$$ \eta \: V \to V_Q, \quad Q(\eta(x),y) = \omega(x,y). $$
Here we identify $V$ with a dense subspace of the real Hilbert 
space $V_Q$ and consider $D$, as a well as $\eta$, as unbounded 
operators with domain $\cD = V$. 

We have already seen in the proof of Theorem~\ref{thm:4.2} that 
\begin{equation}
  \label{eq:eta-d-rel}
\eta \circ D = \id_V, 
\end{equation}
which implies in particular that $V \subeq \eta(V)$. 
As the subspace $V \subeq V_Q$ is invariant 
under the strongly continuous 
one-parameter group $\gamma(\R)$ of isometries of $V_Q$, 
its infinitesimal generator in $V_Q$ satisfies 
$$ \gamma'(0) = \oline D = - D^\top $$
(Proposition~\ref{prop:real-stone}). 
The operator $D \: V \to V_Q$ is injective, so that we may 
define its inverse by 
$$ \cD(D^{-1}) \: D(V) \to V, \quad Dv \mapsto v. $$
The relation \eqref{eq:eta-d-rel} now implies that 
$\eta$ is an extension of $D^{-1}$, which is defined on all 
of $V$.

As $\gamma$ defines a smooth $\R$-action on $V$ and 
$\eta$ is continuous, $\eta(V)$ consists of smooth vectors 
for $\gamma$. For $v,w \in V$, the relation 
$$ Q(\eta v, Dw) 
= \omega(v,Dw) 
= - \omega(Dv,w) 
= -  Q(\eta D v,w) =  -Q(v,w) $$ 
implies that $\oline D \eta(v) = - D^\top \eta(v) = v$, i.e., 
\begin{equation}
  \label{eq:olined-q-rel}
\oline D \circ \eta = \id_V. 
\end{equation}
This shows in particular that 
$V \subeq \im(\oline D)$, and since $D(V)$ is dense in 
$\im(\oline D)$, it follows that 
$D(V)$ is dense in $V_Q$. From that we further derive 
$$\ker(\oline D) = \ker(D^\top) = \im(D)^\bot = \{0\}, $$
i.e., that $\oline D$ is injective. 
For $v \in V$ we derive from 
$\oline D \oline D^{-1} v = v = \oline D \eta(v)$
that 
\begin{equation}
  \label{eq:dinv}
\oline D^{-1}\res_V = \eta. 
\end{equation}

On the dense subspace $V^1 := \im(\oline D)$ of $V_Q$ we 
now define 
$$ \omega_1(v,w) := Q(\oline D^{-1}v,w), $$
and note that, by \eqref{eq:4.1} and \eqref{eq:dinv}, $\omega_1$ extends $\omega$. 
Further 
$$ \omega_1(\oline D w, \oline D v) 
= Q(w, \oline D v) = - Q(v, \oline D w) 
= -\omega_1(\oline D v, \oline D w)  $$
implies that $\omega_1$ is skew-symmetric. 
It is continuous with respect to the norm defined by
$$ \|v\|_*^2 = \|v\|_Q^2 + \|\oline D^{-1}v\|_Q^2. $$
Note that 
$$ \|\oline D^{-1}v \|_Q^2 = \|\eta(v)\|_Q^2 = \kappa(v,v) 
\quad \mbox{ for } \quad v \in V. $$

As $\eta \: V \to V_Q$ is continuous, we further observe that 
$\eta(V) \subeq \cD^\infty(\oline D)$ and the map 
$V \to \cD^\infty(\oline D)$
is continuous with respect to the natural Fr\'echet topology on 
the right hand side.  
\end{rem}

In the preceding remark we have used the 
scalar product $Q$ on $V$ to obtain a Hilbert space environment 
for $V$. Alternatively, one may consider the scalar product 
$\kappa(v,w) = Q(\eta(v),\eta(w)).$

\begin{lem} \mlabel{lem:4.9} The map 
$$ \eta \: (V,\kappa) \to (\eta(V), Q) $$
is isometric and  extends to an isomorphism 
$\hat\eta \: V_{\kappa} \to V_Q$
of real Hilbert spaces. It intertwines the unbounded 
operator $D \: V \to V_{\kappa}$
with the operator 
\[  \oline D\res_{\eta(V)} = \eta^{-1} \: \eta(V) \to V \subeq V_Q. \]
The closure of $D$ on $V_\kappa$ is injective with dense range. 
\end{lem}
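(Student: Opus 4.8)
The plan is to establish the four assertions in turn, drawing throughout on the relations collected in Remark~\ref{rem:4.9}. The isometry of $\eta \: (V,\kappa) \to (\eta(V),Q)$ is immediate from the very definition $\kappa(v,w) = Q(\eta(v),\eta(w))$: polarizing, it suffices to record $\|\eta(v)\|_Q^2 = \kappa(v,v)$, which is how $\kappa$ was defined. Since $\kappa$ is positive definite, $\eta$ is injective, hence a bijective isometry onto $\eta(V)$, and therefore extends uniquely to an isometric embedding $\hat\eta \: V_\kappa \to V_Q$ of the completion.

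To see that $\hat\eta$ is onto I would argue that an isometry with complete domain has closed image, so $\im(\hat\eta) = \oline{\eta(V)}$; it then remains to check that $\eta(V)$ is dense in $V_Q$. This follows from $\eta \circ D = \id_V$ (relation \eqref{eq:eta-d-rel}), which gives $V = \eta(D(V)) \subseteq \eta(V)$, together with the density of $V$ in $V_Q$ built into the definition of the completion. Hence $\im(\hat\eta)$ is a closed subspace containing the dense set $\eta(V)$, so $\hat\eta$ is an isomorphism of real Hilbert spaces.

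For the intertwining I would first identify the two operators on the $V_Q$-side: since $\eta \: V \to \eta(V)$ is a bijection and $\oline D(\eta(v)) = v$ by \eqref{eq:olined-q-rel}, we have $\oline D\res_{\eta(V)} = \eta^{-1}$. The intertwining then collapses to a one-line computation on the common domain: for $v \in V$, using $\eta \circ D = \id_V$ once more, $\hat\eta(Dv) = \eta(Dv) = v = \eta^{-1}(\eta(v)) = (\oline D\res_{\eta(V)})(\hat\eta v)$. Equivalently, $\hat\eta$ carries the graph $\{(v,Dv) : v \in V\}$ of $D$ on $V_\kappa$ exactly onto the graph $\{(\eta v, v) : v \in V\}$ of $\eta^{-1}$ on $V_Q$.

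Finally, for the closure statement I would transport everything through the isomorphism $\hat\eta$. Conjugation by the bounded invertible $\hat\eta$ commutes with taking operator closures, so the closure of $D$ on $V_\kappa$ is carried to the closure of $\eta^{-1} = \oline D\res_{\eta(V)}$ on $V_Q$. The step I expect to need the most care is that $\eta(V)$ is a \emph{core} for $\oline D$: it lies in $\cD(\oline D)$ because $\eta(V)$ consists of smooth vectors for $\gamma$ (Remark~\ref{rem:4.9}), and it contains $V$, which is a core for $\oline D$ since $\oline D$ is by construction the closure of $D\res_V$ (Proposition~\ref{prop:real-stone}); hence the closure of $\oline D\res_{\eta(V)}$ is $\oline D$ itself. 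As $\oline D$ is injective with dense range — its kernel is $\im(D)^\bot = \{0\}$ and $V \subseteq \im(\oline D)$ is dense, both recorded in Remark~\ref{rem:4.9} — and since the Hilbert space isomorphism $\hat\eta$ preserves injectivity and density of the range, the closure of $D$ on $V_\kappa$ is injective with dense range. The main obstacle is thus not a hard estimate but the bookkeeping of domains across the two completions $V_\kappa$ and $V_Q$, specifically the core property that justifies replacing $\oline D\res_{\eta(V)}$ by its closure $\oline D$.
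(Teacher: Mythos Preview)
Your proof is correct and follows essentially the same route as the paper: both arguments hinge on the relations $\eta \circ D = \id_V$ and $\oline D \circ \eta = \id_V$ from Remark~\ref{rem:4.9}, the core property of $\eta(V) \supseteq V$ for $\oline D$ in $V_Q$, and the transport of injectivity and dense range of $\oline D$ through the isomorphism~$\hat\eta$. You spell out the isometry and surjectivity of $\hat\eta$ more explicitly than the paper does, but the substance is the same.
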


\begin{prf} The relation 
$\eta \circ D = \id_V$ leads to 
\[ \eta \circ D \circ \eta^{-1} = \eta^{-1}\res_{\eta(V)} = 
\oline D \res_{\eta(V)}. \] 

Since $V$ is a core for $D$ in $V_{\kappa}$ and 
$\eta(V) \supeq V$ is a core for $D$ in $V_Q$, the map 
$\hat\eta$ intertwines the closure $\oline D^{\kappa_0}$ 
with the closure $\oline D = \oline D^{Q}$. 
From the corresponding properties of $\oline D$ 
(Remark~\ref{rem:4.9}), it now follows that 
$\oline D^{\kappa_0}$ is injective and has dense range. 
\end{prf}

\section{Complex structures} 
\mlabel{sec:5}

It is well known that symplectic complex structures satisfying a positivity 
condition are the natural key to unitary representations of the 
Heisenberg group $\Heis(V,\omega)$ (cf.\ \cite[p.~455]{FS00}). 
Therefore we study in this section how suitable complex structures 
on $V$, resp., a slightly enlarged space, can be obtained from the available 
structure defined by the skew-symmetric operator $D$ on $(V,\kappa)$.

\begin{lem} \mlabel{lem:6.1}
Let $\cH$ be a complex Hilbert space and $\gamma:\R\rightarrow \UU(\cH)$ be a strongly continuous one-parameter group with selfadjoint generator $A = -i \gamma'(0)$. 
Equip $\cD^\infty(A)$ with the $C^\infty$-topology 
for which the inclusion $\cD^\infty(A) \to \cH^{\N_0}, 
v \mapsto (A^nv)_{n \in \N_0}$ is a topological embedding. 
Then the following assertions hold:
\begin{description}
\item[\rm(a)] $\cD^\infty(A)$ is a Fr\'echet space and the action $\R\times\cD^\infty(A)\rightarrow\cD^\infty(A),(t,v)\mapsto\gamma(t)v$ is smooth.
\item[\rm(b)] Let $V\subset \cD^\infty(A)$ be a real subspace invariant under $\gamma$ and dense in $\cD^\infty(A)$ w.r.t. the Hilbert norm. Then $V$ is also dense in $\cD^\infty(A)$ w.r.t. the $C^\infty$-topology.
\end{description}
\end{lem}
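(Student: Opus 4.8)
The plan is to prove both assertions of Lemma~\ref{lem:6.1} using the standard machinery for one-parameter groups on Hilbert spaces, with the description of $\cD^\infty(A)$ as a projective limit.

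\medskip

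\noindent\textbf{Part (a).}
First I would verify that the $C^\infty$-topology on $\cD^\infty(A)$ is Fr\'echet. By definition, the inclusion $\cD^\infty(A) \to \cH^{\N_0}$, $v \mapsto (A^nv)_{n\in\N_0}$, is a topological embedding, so the topology is that induced by the seminorms $p_n(v) := \|A^n v\|$, a countable family; hence the space is metrizable and locally convex. For completeness I would take a Cauchy sequence $(v_k)$: for each $n$ the sequence $(A^n v_k)_k$ is Cauchy in $\cH$, hence converges to some $w_n \in \cH$. Since $A$ is closed (it is selfadjoint), and $A v_k = (A v_k) \to w_1$ while $v_k \to w_0$, one gets $w_0 \in \cD(A)$ and $A w_0 = w_1$; iterating shows $w_0 \in \cD^\infty(A)$ with $A^n w_0 = w_n$, so $(v_k) \to w_0$ in the $C^\infty$-topology. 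Thus $\cD^\infty(A)$ is Fr\'echet. For smoothness of the action, the key point is that $\gamma(t)$ commutes with $A$ and preserves each $\cD(A^n)$, so $t \mapsto \gamma(t)v$ is continuous in each seminorm $p_n$; differentiability follows because $\frac{d}{dt}\gamma(t)v = A\gamma(t)v = \gamma(t)Av$ holds in $\cH$ for $v \in \cD(A)$, and for $v \in \cD^\infty(A)$ this persists in every seminorm, giving that the curve is $C^\infty$ with derivatives expressible through the smooth $\cH$-valued curves $t \mapsto \gamma(t)A^n v$. A clean way to organize this is to note that $t \mapsto \gamma(t)v$ has $k$-th derivative $A^k\gamma(t)v$, and that each $p_n$ of the derivative equals $\|A^{n+k}\gamma(t)v\| = \|A^{n+k}v\|$, bounded uniformly in $t$.

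\medskip

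\noindent\textbf{Part (b).}
This is the substantive part. Given $v \in \cD^\infty(A)$, I must approximate it in the $C^\infty$-topology by elements of $V$, knowing only that $V$ is $\gamma$-invariant and dense in the Hilbert norm. The standard device is smoothing by convolution against a compactly supported test function: for $\phi \in C_c^\infty(\R)$ set $\gamma(\phi)w := \int_\R \phi(t)\gamma(t)w\,dt$. The crucial facts are that $\gamma(\phi)$ maps $\cH$ into $\cD^\infty(A)$ with $A^n\gamma(\phi)w = \gamma((-1)^n\phi^{(n)})w$ (integration by parts using $A\gamma(t) = \frac{1}{i}\frac{d}{dt}\gamma(t)$), and that $\gamma(\phi)$ preserves $\overline{V}$ — indeed if $w \in V$ then $\gamma(\phi)w$ is a norm-limit of Riemann sums $\sum_j \phi(t_j)\gamma(t_j)w\,\Delta t_j$ lying in $V$, so $\gamma(\phi)w \in \oline V$ (closure in the Hilbert norm). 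I would then pick an approximate identity $(\phi_\eps)$ with $\phi_\eps \to \delta_0$; for $v \in \cD^\infty(A)$ one checks $\gamma(\phi_\eps)v \to v$ in the $C^\infty$-topology because $A^n\gamma(\phi_\eps)v = \gamma(\phi_\eps)A^nv \to A^nv$ in $\cH$ for every $n$ (the smoothing commutes with $A$ on smooth vectors, reducing each seminorm to ordinary strong continuity of $\gamma$).

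\medskip

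\noindent The main obstacle will be part (b): I cannot directly smooth an element of $V$ and stay in $V$, since $\gamma(\phi)w$ lands only in $\oline V$, not $V$ itself. The remedy is a two-step approximation. Given $v \in \cD^\infty(A)$ and $\eps > 0$, first choose $\phi$ so that $\gamma(\phi)v$ is within $\eps$ of $v$ in finitely many seminorms $p_0,\dots,p_N$ (it suffices to control finitely many because the $C^\infty$-topology is metrizable). Now $\gamma(\phi)v \in \cD^\infty(A)$ is a fixed smooth vector of the specific form $\int\phi(t)\gamma(t)v\,dt$; since $V$ is Hilbert-dense, pick $w \in V$ with $\|w - v\|$ small, and observe that $\gamma(\phi)w \in \oline V$ approximates $\gamma(\phi)v$ in every seminorm, by the identity $A^n(\gamma(\phi)w - \gamma(\phi)v) = \gamma((-1)^n\phi^{(n)})(w-v)$, whose $\cH$-norm is bounded by $\|\phi^{(n)}\|_{L^1}\|w-v\|$. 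Thus $\gamma(\phi)v$ lies in the $C^\infty$-closure of $\oline V$. Finally I must upgrade from $\oline V$ back to $V$: since $V$ is $\gamma$-invariant and the smoothing operators $\gamma(\phi)$ map $\oline V$ into the $C^\infty$-closure of $V$, a diagonal argument combining the Hilbert-density of $V$ with the seminorm estimates above shows that $V$ itself is $C^\infty$-dense. The cleanest formulation is to prove directly that for $w \in V$ the smoothed vector $\gamma(\phi)w$ is a $C^\infty$-limit of elements $\sum_j \phi(t_j)\gamma(t_j)w\,\Delta t_j \in V$ (here $\gamma$-invariance of $V$ is used), so $\gamma(\phi)V \subseteq \oline{V}^{C^\infty}$; combined with $\gamma(\phi_\eps)v \to v$ and Hilbert-density this yields the claim.
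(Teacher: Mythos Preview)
Your proposal is correct and, in its ``cleanest formulation,'' follows exactly the paper's approach: for (a) the paper simply cites \cite{Ne10b} rather than arguing directly, and for (b) it smooths a Hilbert-dense sequence $v_n \to v$ by $\gamma(f)$, uses $A^k\gamma(f) = i^k\gamma(f^{(k)})$ to get $C^\infty$-convergence $\gamma(f)v_n \to \gamma(f)v$, and notes that the $\gamma(f)v_n$ already lie in the $C^\infty$-closure $\oline V$ (the paper phrases this abstractly as ``the Fr\'echet space $\oline V$ is $\gamma(f)$-invariant,'' which is precisely your Riemann-sum observation). One small slip: your identity $A^n\gamma(\phi)w = \gamma((-1)^n\phi^{(n)})w$ should carry the factor $i^n$ rather than $(-1)^n$, since $A = -i\gamma'(0)$; this does not affect the norm estimates you actually use.
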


\begin{proof}
For (a) see \cite[Rem.~4.7(a) and Cor.~4.5]{Ne10b}.

(b) Let $v \in \cD^\infty(A)$ and choose a sequence $(v_n)$ in $V$ converging to 
$v$ in $\cH$. For $f \in C_c^\infty(\R,\R)$ the bounded operators 
\[ \gamma(f)w = \int_\R f(t)\gamma(t)w\, dt, \quad w \in \cD^\infty(A), \] 
satisfy 
\begin{align*}
A\gamma(f)w 
&= \int_\R f(t)A\gamma(t)w\, dt 
= -i \int_\R f(t)\gamma'(t)w\, dt \\
&= i \int_\R f'(t)\gamma(t)w\, dt = i \gamma(f')w, 
\end{align*}
and by iteration we obtain $A^k \gamma(f) = i^k \gamma(f^{(k)})$. 
We thus obtain 
\[ A^k \gamma(f)v_n 
= i^k \gamma(f^{(k)})v_n  \to i^k \gamma(f^{(k)})v 
= A^k \gamma(f)v.\] 
This shows that $\gamma(f)v$ is contained in the closure 
$\overline{V}$ of $V$ in $\cD^\infty(A)$ in the $C^\infty$-topology. Here we have used that the Fr\'echet space $\overline{V}$ is invariant under $\gamma(f)$. In view of (a), every neighborhood of $v$ in $\cD^\infty(A)$ 
contains an element of the form $\gamma(f)v$, so that $v \in \oline V$. 
\end{proof}

\begin{ex} \mlabel{ex:6.3} (a) Let $\cH$ be a complex Hilbert space and 
$A = A^*$ be a self-adjoint operator with the domain 
$\cD \subeq \cH$, for which the dense subspace 
$V$ is invariant under the action of the 
corresponding unitary one-parameter group 
$\gamma(t) := e^{itA}$ and also under $A$. 
Then $V$ is a core for $A$ consisting of smooth vectors and 
dense in $\cD^\infty(A)$ (cf.\ \cite[Prop.~II.1.7]{EN00}). 

On $V$ we have the real scalar product 
$\kappa(v,w) := \Re \la v,w \ra$, and 
$D := iA\res_{V}$ is a skew-symmetric operator, defining 
the alternating form 
$$ \omega_D(x,y) := \kappa(Dx,y) 
= \Re \la iAx,y \ra 
= -\Im \la Ax,y \ra. $$
We further have 
$$ \omega_D(Dx,y) = -\Im \la A(iA)x,y\ra  
= -\kappa(A^2x,y) =  -\kappa(Ax, Ay), $$
so that $Q(x,y) := -\omega_D(Dx,y)$ is positive definite 
if and only if $\ker(A)\cap V = \{0\}$. 

(b) A natural topology on $V$ is the $C^\infty$-topology.
This topology turns $\g_A = \heis(V,\omega_D) \rtimes_D \R$ into a 
locally convex Lie algebra that integrates to a Lie 
group $G_A = \Heis(V,\omega_D) \rtimes_\gamma \R$ which in the case $V=\cD^\infty(A)$ is a Fr\'echet--Lie group (cf.\ Lemma~\ref{lem:6.1}(a)).

(c) A typical class of examples arises for 
$V= \cD^\infty(A)$, the space of all smooth vectors of $A$. 
In this case the subspace $AV$ of $V$ is dense with respect to the 
Fr\'echet topology if and only if the space $\cD^{-\infty}(A) 
:= \cD^\infty(A)'$ of distribution vectors contains no $\gamma$-invariant 
element. This is not always the 
case, even if $\ker A$ is trivial. 

(d) Let us recall Example~\ref{ex:no-eigenvalue}(b). Here we clearly are in the situation of Example~\ref{ex:6.3}(a) and hence obtain a double extension $\g(V,\kappa_1,D)$, where $\kappa_1(v,w):=\Re\langle v,w\rangle$ and $V$ is equipped with the $C^\infty$-topology. Furthermore $D$ is injective on $V$ but $D(V)$ is not dense in $V_{\kappa_1}$. On the other hand, by Remark~\ref{rem:4.3}, there exists a continuous real scalar product $\kappa$ on $V$ such that $\kappa(Dx,y)=\kappa_1(Dx,y)$ holds for $x,y \in V$ and $D(V)$ is dense in $V_\kappa$. In particular, the real scalar products $\kappa$ and $\kappa_1$ are different but the oscillator algebras $\g(V,\kappa_1,D)$ and $\g(V,\kappa,D)$ coincide.
\end{ex}

\begin{lem} \mlabel{lem:6.2} 
Suppose that $A \: \cD \to \cH$ is an essentially selfadjoint 
operator on the complex Hilbert space $\cH$. Then 
$\ker(\oline A) = \{0\}$ is equivalent to the density of $\im(A) = 
A(\cD)$ in $\cH$. 
\end{lem}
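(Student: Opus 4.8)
The plan is to reduce the claim to the standard self-adjoint identity relating the kernel of an operator to the orthogonal complement of its range, and then to close the gap between $\im(A)$ and $\im(\oline A)$ by a core argument. By hypothesis $A$ is essentially self-adjoint, so its closure $\oline A = A^*$ is self-adjoint, and this is the only place the essential self-adjointness will be used.

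First I would record the self-adjoint step. For any densely defined operator $T$ one has the standard relation $\ker(T^*) = \im(T)^\bot$ (see \cite{Ru73} for the closedness of the adjoint and these basic identities). Applying this to $T = \oline A$ and using $(\oline A)^* = \oline A$ gives
\[ \ker(\oline A) = \ker\big((\oline A)^*\big) = \im(\oline A)^\bot. \]
Since a subspace is dense exactly when its orthogonal complement is trivial, this already shows that $\ker(\oline A) = \{0\}$ is equivalent to the density of $\im(\oline A)$ in $\cH$. It therefore remains to replace $\im(\oline A)$ by $\im(A)$.

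The bridging step is to prove that $\im(A)$ and $\im(\oline A)$ have the same closure in $\cH$. The inclusion $A \subseteq \oline A$ gives $\im(A) \subseteq \im(\oline A)$, hence one containment of closures is immediate. For the reverse, I would invoke that $\cD = \cD(A)$ is, by the very definition of $\oline A$ as the closure of $A$, a core for $\oline A$: given any $y = \oline A x$ with $x \in \cD(\oline A)$, there is a sequence $x_n \in \cD$ with $x_n \to x$ and $A x_n \to \oline A x = y$ in $\cH$, so $y$ lies in the closure of $\im(A)$. This yields $\im(\oline A) \subseteq \overline{\im(A)}$ and therefore $\overline{\im(\oline A)} = \overline{\im(A)}$. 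Combining with the previous step gives the asserted equivalence.

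The argument is essentially a two-line reduction once the self-adjoint identity is in hand; the only point deserving care — and the mildest obstacle — is the core argument in the last step, where one must use that $\cD$ is a core for $\oline A$ rather than merely that $\cD$ is dense in $\cH$, so that convergence takes place in the graph norm and lets one approximate elements of $\im(\oline A)$ by elements of $\im(A)$.
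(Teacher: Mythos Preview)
Your proof is correct and follows essentially the same route as the paper: the paper's proof is the one-line relation $\ker(\oline A) = \im(\oline A)^\bot = \im(A)^\bot$, justified by the density of $\im(A)$ in $\im(\oline A)$, which is exactly your self-adjoint identity plus the core argument you spell out.
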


\begin{prf} This follows from the relation 
$\ker(\oline A) = \im(\oline A)^\bot = \im(A)^\bot$ 
(\cite{Ru73}), 
which in turn follows from the density of $\im(A)$ in $\im(\oline A)$. 
\end{prf}

In general $\ker A = \{0\}$ does not imply that 
$\ker \oline A = \{0\}$ 
(cf.\ Example~\ref{ex:no-eigenvalue}(b)).

\begin{rem} \mlabel{rem:complex}
Now let $(V, \kappa)$ be a real pre-Hilbert space and 
$D:V\rightarrow V$ be a skew-symmetric operator which integrates to a continuous 
orthogonal one-parameter group $\gamma$ on~$V$. 
We assume that $D(V)$ is dense in $\cH :=V_\kappa$ 
(cf.\ Remark~\ref{rem:4.3}). 
Then $V_\C \subeq \cH_\C$ is invariant under the unitary 
one-parameter group $\gamma$ on $\cH_\C$, hence a core for~$D$. 
Now the operator $A := -iD$, defined on $V_\C$, is essentially 
self-adjoint and the canonical antilinear 
involution $\sigma$ of $\cH_\C$ 
with $(\cH_\C)^\sigma = \cH$ satisfies 
$$ \sigma \circ \oline A = - \oline A \circ \sigma, $$
so that $\sigma P(E) \sigma = P(-E)$ holds for the spectral 
measure $P$ of $\oline A$. 
The assumption that $D(V)$ is dense in $\cH$ implies that 
$P(\{0\})= 0$ (Lemma~\ref{lem:6.2}), so that $\cH_\C$ decomposes into two closed subspaces 
$$ \cH_\C = \cH_+ \oplus \cH_-, \quad \mbox{ where } \quad 
\cH_\pm = P(\pm ]0,\infty[)\cH_\C. $$
In view of $\sigma(\cH_\pm) = \cH_\mp$, we obtain an isometric real linear 
isomorphism 
$$ p \: \cH_+ \to \cH, \quad v \mapsto \frac{1}{\sqrt 2}(v + \sigma(v)). $$
This leads to the structure of a complex Hilbert space on $\cH$, where 
the complex structure is given by 
$$ I p(v) := p(iv) \quad \mbox{ for } \quad  v \in \cH_+. $$
The scalar product on $\cH$ satisfies 
$$\Re\langle x,y \rangle = \kappa(x,y) \quad \mbox{ for } \quad x,y\in \cH$$ 
and $\gamma$ becomes a unitary one-parameter group on $\cH$.
We define the positive symmetric operator $A_\R$ on $\cH$ by 
$A_\R p(v) := p(Av)$ for $v \in \cH_+ \cap \cD(A)$ 
and note that 
$$ \tilde \cD := \{ v \: p^{-1}(v) \in \cD(A)\} 
= p(\cH_+ \cap \cD(A)). $$
For $p(v) \in V$, we then have 
$$ Dp(v) = p(Dv) = p(iAv) = I p(Av) = I A_\R p(v), $$
so that 
$$ D = I A_\R \quad \mbox{ and } \quad A_\R = -ID \geq 0 $$ 
hold on $V$. 
\end{rem}

\begin{rem} \mlabel{rem:5.4} 
In general the subspace $V_\C$ does not need to be adapted 
to the decomposition $\cH_\C = \cH_+ \oplus \cH_-$, but if 
$V = \cD^\infty(D)$, 
then $V_\C = \cD^\infty(A)$ is invariant under the 
spectral projections $P(\pm ]0,\infty[)$. 
\end{rem}

\begin{defn}
Let $(V,\omega)$ be a symplectic vector space. A complex structure $I$ on $V$ is called {\it $\omega$-positive} if 
$\omega(Iv,w)$ is a positive definite symmetric bilinear form and 
$I$ is called  {\it $\omega$-negative} if $-I$ is $\omega$-positive.
\end{defn}

We summarize the preceding discussion in the following proposition.

\begin{prop} \mlabel{prop:propcomplstr} 
Let $(V, \kappa)$ be a real pre-Hilbert space and 
$D:V\rightarrow V$ be a skew-symmetric operator which integrates to a continuous 
orthogonal one-parameter group $\gamma$ on $V$. Assume that $D(V)$ is dense in $V_\kappa$. 
Then there exists a $\kappa$-orthogonal 
$\omega_D$-negative complex structure $I$ on $\cD^\infty(\overline{D}) \subset V_\kappa$ commuting with $\gamma(\R)$.
\end{prop}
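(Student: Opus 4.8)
The plan is to read the complex structure directly off the construction carried out in Remark~\ref{rem:complex}, and then to verify the four required properties one at a time. Since $D(V)$ is dense in $\cH := V_\kappa$, that remark shows that $A := -iD$ is essentially self-adjoint on $\cH_\C$ with $P(\{0\}) = 0$ for the spectral measure $P$ of $\oline A$; the decomposition $\cH_\C = \cH_+ \oplus \cH_-$ into the positive and negative spectral subspaces, together with the real-linear isometry $p \: \cH_+ \to \cH$, $p(v) = \tfrac{1}{\sqrt 2}(v + \sigma(v))$, equips $\cH$ with a complex Hilbert space structure whose complex structure $I$ is characterized by $I p(v) = p(iv)$ and whose real part satisfies $\Re\la \cdot,\cdot\ra = \kappa$. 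As $p$ is a real-linear isomorphism transporting multiplication by $i$, the relation $I^2 = -\id$ is immediate, so $I$ is genuinely a complex structure on $\cH$, a fortiori on $\cD^\infty(\oline D)$.

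Next I would dispatch the three ``easy'' properties. The $\kappa$-orthogonality is immediate from $\kappa = \Re\la\cdot,\cdot\ra$ and the unitarity of multiplication by $i$, since $\kappa(Ix, Iy) = \Re\la Ix, Iy\ra = \Re\la x, y\ra = \kappa(x,y)$. For the commutation with $\gamma(\R)$, note that the spectral projections $P(\pm\,]0,\infty[)$ commute with the complexified group $\gamma_\C$, so $\gamma$ preserves $\cH_+$ and acts complex-linearly there; since $p$ intertwines $\gamma|_{\cH_+}$ with $\gamma$ on $\cH$ (using $\sigma\gamma_\C(t)\sigma = \gamma_\C(t)$) and intertwines the complex structures, $\gamma$ becomes a unitary one-parameter group on $(\cH, I)$ and therefore commutes with $I$. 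Finally, $I$ is bounded and commutes with $\gamma(\R)$, hence with the generator $\oline D = \gamma'(0)$; as $I^{-1} = -I$ is bounded, $I$ preserves $\cD(\oline D^n)$ for every $n$ and so stabilizes $\cD^\infty(\oline D)$.

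It remains to establish $\omega_D$-negativity, which I regard as the crux. The mechanism is the identity $A_\R = -I\oline D \geq 0$ from Remark~\ref{rem:complex}, valid on $\cD(\oline D) \supseteq \cD^\infty(\oline D)$. Using $I\oline D = \oline D I$ on $\cD(\oline D)$, I would compute for $v, w \in \cD^\infty(\oline D)$ that $\omega_D(Iv, w) = \kappa(\oline D Iv, w) = \kappa(I\oline D v, w) = -\kappa(A_\R v, w)$, so that $-\omega_D(Iv,w) = \kappa(A_\R v, w)$. This form is symmetric, because $A_\R$ is $\kappa$-symmetric (being a positive self-adjoint operator on $(\cH, I)$, or directly from the $\kappa$-skewness of both $I$ and $\oline D$), and it is positive definite: $A_\R \geq 0$ gives $\kappa(A_\R v, v) \geq 0$, while $\kappa(A_\R v, v) = 0$ forces $\oline D v = 0$, hence $v = 0$ by the injectivity of $\oline D$ (which follows from $\ker \oline D = \im(D)^\bot = \{0\}$, i.e.\ the density of $D(V)$, cf.\ Lemma~\ref{lem:6.2}). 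Thus $-I$ is $\omega_D$-positive, i.e.\ $I$ is $\omega_D$-negative, completing the verification. The main obstacle I anticipate is precisely the bookkeeping that makes this last computation legitimate on $\cD^\infty(\oline D)$: one must confirm that $I$ stabilizes this domain, that $A_\R$ agrees with $-I\oline D$ there, and that injectivity of $\oline D$ (not merely of $D$) is available — all of which trace back to the hypothesis that $D(V)$ is dense in $V_\kappa$.
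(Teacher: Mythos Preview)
Your proposal is correct and follows essentially the same route as the paper: both take the complex structure $I$ from the spectral decomposition in Remark~\ref{rem:complex}, then verify $\kappa$-orthogonality, commutation with $\gamma(\R)$, invariance of $\cD^\infty(\oline D)$, and $\omega_D$-negativity via the identity $\omega_D(Iv,w) = -\kappa(A_\R v,w)$ with $A_\R = -I\oline D \geq 0$. Your write-up is more explicit than the paper's terse argument (in particular on why $I$ preserves $\cD^\infty(\oline D)$ and why the form is strictly positive definite rather than merely semidefinite), but the underlying mechanism is identical.
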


\begin{proof}
We recall the complex Hilbert structure on $V_\kappa$ 
from Remark~\ref{rem:complex}. Obviously $\cD^\infty(\overline{D})=\cD^\infty(A_\R)$ is a complex subspace. The corresponding complex structure $I$ on $\cD^\infty(\overline{D})$ is $\omega_D$-negative because $A_\R$ is positive and
$$\omega_D(Iv,w)=-\kappa(A_\R v,w)  \quad \mbox{ for } \quad  v,w \in \cD^\infty(\overline{D}).$$
Since $\gamma$ acts on $V_\kappa$ by unitaries, $I$ commutes with $\gamma(\R)$. That $I$ is $\kappa$-isometric follows 
from the construction in Remark \ref{rem:complex}, but it also follows from the fact that $I$ is $\omega_D$-negative and $D(V)\subset V_\kappa$ is dense.
\end{proof}

\begin{rem} \mlabel{rmk:complexunique}
Let $G(V,\omega,\gamma)$ be a generalized oscillator group for which 
$Q(x,y)=\omega(Dx,y)$ is positive definite. Then there exists at most one $\omega$-positive complex structure $I$ on $V$ such that $I$ commutes with $\gamma(\R)$ and $D(V)$ is a dense subset of $V$ in the norm $\|x\|_I=\sqrt{\omega(Ix,x)}$. For a proof see \cite[Scholium~3.3]{BSZ92}.
In particular, in Proposition \ref{prop:propcomplstr} the $\omega_D$-negative complex structure commuting with $\gamma(\R)$ is unique.
\end{rem}

\begin{ex} A simple instructive example is the following. 
We consider $V = \R^2$ with the canonical scalar product $\kappa$ and 
$$ D = \pmat{0 & -d \\ d & 0}, \quad d > 0, \quad 
A = -i D = \pmat{0 & id \\ -id & 0}.$$
Then $\omega_D(x,y) = \la Dx,y \ra = d(x_1 y_2 - x_2 y_1)$ and 
$$ A(e_1 \mp i e_2) = \pm d(e_1 \mp i e_2), $$
so that 
$\cH_\pm = \C(e_1 \mp i e_2).$ Hence the $\omega_D$-negative complex structure on $V$ which commutes with $D$ is given by 
$Ie_1=e_2$.
\end{ex}

\begin{ex} Consider the case where $\g=\g(V,\omega,D$) is a direct 
limit oscillator algebra, i.e., let $V_n\subset V$ be subspaces with
$$V=\bigcup_n V_n, \quad V_n \subset V_{n+1} \quad \mbox{ and } 
\quad \dim V_n <\infty$$
such that $\omega\vert_{V_n\times V_n}$ is non-degenerate and $D(V_n)\subset V_n$ holds for all $n\in \N$. We equip $\g$ with the direct limit topology. For $n\in \N$ we set $D_n:= D\vert_{V_n}$.

First we consider the conditions (C1-3) of Theorem~\ref{thm:char}. Suppose (C1) holds, i.e., $Q(x,y)=\omega(Dx,y)$ is positive definite. Then $D$ is injective and $D_n^{-1}$ exists for all $n\in\N$, hence also $D^{-1}:V\rightarrow V$ exists. Now $\omega(x,y)=Q(D^{-1}x,y)$ implies that $i_x\omega$ is $Q$-continuous, i.e., (C2) holds. Moreover $Q(\eta(x),y)= Q(D^{-1}x,y)$ for $x,y \in V$ implies $\eta=D^{-1}$, in particular $\eta$ takes values in $V$. Since the linear operator $D^{-1}$ is continuous with respect to the direct limit topology on $V$, condition (C3) also holds.  
Therefore $\g$ has a non-trivial semi-equicontinuous coadjoint orbit 
if and only if, after possibly replacing $D$ by $-D$, $Q$ is positive definite. 

Suppose that $Q$ is positive definite. The operator $-D^2$ is positive for the real scalar product $\kappa(x,y)=\omega(x,D^{-1}y)$ on $V$ and hence we obtain a complex structure $I_n:=\frac{D_n}{\sqrt{-D_n^2}}$  on each finite dimensional $V_n$. By construction the $I_n$
 fit together to a complex structure $I$ on $V$ satisfying $I\vert_{V_n}=I_n$. 
With the scalar product $\langle x,y \rangle := \kappa(x,y)-i\kappa(Ix,y)$ we turn $V$ into a complex pre-Hilbert space. Applying the Spectral Theorem 
to the positive operators $A_n:=-I_nD_n$ on the complex subspaces $V_n$ yields an orthonormal Hamel basis $\tilde e_i, i\in\N$ of $V$ with $D \tilde e_i=a_iI \tilde e_i, a_i>0$. By setting $e_i:=\frac{1}{\sqrt{a_i}} \tilde e_i$, $f_i:=-Ie_i$ we obtain a symplectic basis $(e_i,f_i)$ of $V$ with
\begin{align} \label{eq:sympldiag}
De_i=-a_i f_i, \quad Df_i=a_i e_i, \quad a_i>0.
\end{align}
The Hamiltonian takes the form $H_D(v)=\frac{1}{2}\omega(Dv,v)= \frac{1}{2}\sum_i a_i(p_i^2+q_i^2)$ for $v=\sum_i p_i e_i+q_i f_i$. Conversely any symplectic vector space with symplectic basis $(e_i,f_i), i\in\N$, equipped with the direct limit topology and $D$ defined by \eqref{eq:sympldiag} defines an oscillator algebra with semi-equicontinuous orbits.

\end{ex}

\section{Unitary representations} 
\mlabel{sec:6}

The purpose of this section is to show that the Fock--Schr\"odinger   
representation is semibounded for the oscillator group $G = G(V,\omega,\gamma)$ 
with semi-equicontinuous orbits. 
This brings us full circle because it implies 
that the existence of 
non-trivial semibounded unitary representations is equivalent to the 
existence of non-trivial semi-equicontinuous coadjoint orbits.

\subsection{The Fock--Schr\"odinger representation} 

Let $V$ be a complex vector space and 
$\beta \: V \times V \to \C$ be a positive semidefinite hermitian form on
$V$. We define the associated {\it Heisenberg group} 
as 
$\Heis(V,\beta) := \R \times V$ with the product 
$$ (t,v) (s,w) =  \big(t + s - \Im \beta(v,w),v + w\big). $$
For the alternating form $\omega := - 2 \Im \beta$, this means that 
$$ (t,v) (s,w) =  \big(t + s + \frac{1}{2}\omega(v,w),v + w\big). $$
The hermitian form $\beta$ can be reconstructed from $\omega$ and 
the complex structure $I$ by 
\begin{align*}
\beta(v,w) 
&= \Re \beta(v,w) + i \Im \beta(v,w)
= \Im\beta(Iv,w) - \frac{i}{2}\omega(v,w) \\
&= -\frac{1}{2} \omega(Iv,w) - \frac{i}{2}\omega(v,w) 
= -\frac{1}{2}\big(\omega(Iv,w) + i\omega(v,w)\big). 
\end{align*}
This formula shows in particular that, conversely, 
any complex structure $I\in \Sp(V,\omega)$ for which 
$\omega(Iv,v)$ is negative semidefinite leads to a 
positive semidefinite hermitian form $\beta$ on $V$.

The Heisenberg group $\Heis(V,\beta)$ 
acts on $V$ by $(v,t). w := v + w$. 

\begin{prop} \mlabel{prop:fockrep} {\rm(\cite[Prop.~II.4.6]{Ne00})} 
The operators on $\C^V$ defined by 
$$ \big(\pi(t,v)f\big)(z) = e^{it-\frac{1}{2}\beta(v,v)+ \beta(z,v)} f(z-v)
\quad \mbox{ for } \quad g = (t,v) \in \Heis(V,\beta)$$ 
define a representation of $\Heis(V,\beta)$ on $\C^V$ satisfying the Weyl
relations 
$$ \pi(0,v)\pi(0,w) = e^{-i \Im \beta(v,w)} \pi(0,v + w)
= \pi\big(-\Im \beta(v,w),v + w \big). $$
Further, $K(z,w) := e^{\beta(z,w)}$ is a positive definite 
kernel on $V$, and the action of $\Heis(V,\beta)$ 
leaves the corresponding reproducing kernel 
Hilbert space ${\cal H}_K$ invariant 
and defines an irreducible unitary representation on this space. 
\end{prop}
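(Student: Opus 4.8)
The plan is to handle the three assertions in turn: the homomorphism property with its Weyl relations and the positive definiteness of $K$ are direct computations, while invariance of $\cH_K$, unitarity, and irreducibility rest on the reproducing kernel formalism. Throughout I write $g=(t,v)$ and record $\pi$ in multiplier form $(\pi(g)f)(z)=J(g,z)\,f(z-v)$ with $J(g,z)=e^{it-\shalf\beta(v,v)+\beta(z,v)}$.

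First I would check that $\pi$ is a homomorphism of $\Heis(V,\beta)$ on $\C^V$ by composing exponents. In $\pi(t,v)\pi(s,w)$ the $z$-dependent part of the exponent is $\beta(z,v)+\beta(z-v,w)=\beta(z,v+w)-\beta(v,w)$ by linearity in the first slot, and the full exponent is
$$ i(t+s)-\shalf\beta(v,v)-\shalf\beta(w,w)-\beta(v,w)+\beta(z,v+w). $$
Comparing with the exponent of $\pi\big(t+s-\Im\beta(v,w),\,v+w\big)$, the two differ by $-\shalf\beta(v,w)+\shalf\beta(w,v)+i\Im\beta(v,w)$, which vanishes since $\beta(w,v)=\oline{\beta(v,w)}$ gives $\shalf(\beta(w,v)-\beta(v,w))=-i\Im\beta(v,w)$. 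Hence $\pi(g)\pi(h)=\pi(gh)$, and the Weyl relations are the special case $t=s=0$ together with the centrality $\pi(s,0)=e^{is}\id$.

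For positive definiteness of $K(z,w)=e^{\beta(z,w)}$: since $\beta$ is hermitian and positive semidefinite, $(z,w)\mapsto\beta(z,w)$ is itself a positive definite kernel, so by the Schur product theorem all its entrywise powers are positive definite, and therefore so is the convergent series $e^{\beta}=\sum_{n\ge0}\beta^n/n!$ of entrywise powers. This lets me form $\cH_K$ with total subset $\{K_w:w\in V\}$, $K_w(z)=e^{\beta(z,w)}$. The crux of the unitarity statement is the coherent-state identity: applying $\pi(g)$ to $K_w$ and using the same collapse $\beta(z,v)+\beta(z-v,w)=\beta(z,v+w)-\beta(v,w)$ yields
$$ \pi(g)K_w=e^{it-\shalf\beta(v,v)-\beta(v,w)}\,K_{v+w}, $$
so $\pi(g)$ carries $\{K_w\}$ into itself and $\cH_K$ is invariant. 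Unitarity then follows by verifying $\la\pi(g)K_w,\pi(g)K_{w'}\ra=\la K_w,K_{w'}\ra$ on this total set: the scalar prefactors contribute $e^{-\beta(v,v)-\beta(v,w)-\oline{\beta(v,w')}}$ (here $\beta(v,v)\in\R$ is used), while $\la K_{v+w},K_{v+w'}\ra$ expands $\beta(v+w',v+w)$, and the quadratic and cross terms in $v$ cancel, leaving $\la K_w,K_{w'}\ra$. By totality each $\pi(g)$ extends to a unitary on $\cH_K$, with continuity inherited from the multiplier.

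Finally, irreducibility is the step I expect to be the main obstacle. The constant function $\1=K_0$ is cyclic, since $\pi(0,v)\1=e^{-\shalf\beta(v,v)}K_v$ recovers every $K_v$ up to a nonzero scalar and these are total. To upgrade cyclicity to irreducibility I would first reduce to the nondegenerate case: the radical $N=\{v:\beta(v,v)=0\}$ satisfies $\beta(v,\cdot)=0$ for $v\in N$ by Cauchy--Schwarz, so $K$ descends to $V/N$ and the representation factors through $\Heis(V/N,\oline\beta)$ with $\oline\beta$ definite. On the nondegenerate quotient I would identify $\cH_K$ with the Fock space and compute the commutant, showing that any bounded operator commuting with all $\pi(0,v)$ is, via the Weyl relations and the cyclicity of the vacuum $\1$, determined by its value on $\1$ and hence scalar. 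The delicate point is the bookkeeping of this commutant computation in infinite dimensions, where I would lean on the general reproducing-kernel irreducibility criterion of \cite{Ne00}.
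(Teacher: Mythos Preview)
The paper does not give its own proof of this proposition: it is quoted from \cite[Prop.~II.4.6]{Ne00} and left unproved here. So there is no argument in the paper to compare against; I can only assess your sketch on its own terms.

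Your verification of the homomorphism property and the Weyl relations is correct, as is the Schur-product argument for the positive definiteness of $K$. The coherent-state identity $\pi(t,v)K_w=e^{it-\shalf\beta(v,v)-\beta(v,w)}K_{v+w}$ is the right tool, and your inner-product computation cleanly gives invariance of $\cH_K$ and unitarity on the total set $\{K_w\}$.

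The one genuine soft spot is irreducibility. Cyclicity of $\1$ is fine, and the reduction modulo the radical of $\beta$ is reasonable, but the sentence ``any bounded operator commuting with all $\pi(0,v)$ is \dots\ determined by its value on $\1$ and hence scalar'' hides the actual content: you have not explained why a commuting $T$ must satisfy $T\1\in\C\1$. Cyclicity only tells you that $T$ is determined by $T\1$, not that $T\1$ is a multiple of $\1$. One needs an extra ingredient---typically that $\1$ is, up to scalars, the unique vector realizing $\la\pi(0,v)\xi,\xi\ra=e^{-\shalf\beta(v,v)}\|\xi\|^2$, or an extremality argument for the associated positive definite function---before the commutant collapses. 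You seem aware of this, since you end by deferring to the irreducibility criterion in \cite{Ne00}; that is exactly what the paper itself does by citing the result outright, so in substance you and the paper agree: the nontrivial step lives in \cite{Ne00}.
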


\begin{rem}
The constant function $1 = K_0 \in \cH_K$ is a unit vector and 
$$\phi(t,v) := 
\la \pi(t,v)1,1\ra = (\pi(t,v)1)(0) 
= e^{it-\frac{1}{2}\beta(v,v)} 
= e^{it+\frac{1}{4}\omega(Iv,v)}. $$
This formula expresses quite naturally 
how each $\omega$-negative complex structure~$I$  
leads to a positive definite function on $\Heis(V,\omega)$. 
\end{rem}

\begin{rem} \mlabel{rem:densekernel}
Suppose that $\beta$ is positive definite and let $V_\beta$ be the Hilbert space obtained by completion of $(V,\beta)$. We consider the kernel $\hat K(v,w):=e^{\beta(v,w)}$ on $V_\beta$. Then $K=\hat K\vert_{V\times V}$. Since the kernel $K$ on $V$ is jointly continuous with respect to the $\beta$-norm and $V$ is dense in $V_\beta$, we have an isomorphism of Hilbert spaces $$\phi:\cH_{\hat K} \cong \cH_K, f \mapsto f\vert_V.$$ In particular we obtain for the corresponding Fock-Schr\"odinger representations 
$$\pi:\Heis(V,\beta)\rightarrow \UU(\cH_K), \qquad \hat\pi:\Heis(V_\beta,\beta)\rightarrow \UU(\cH_{\hat K})$$ that $\pi\cong \hat \pi\vert_{\Heis(V,\beta)}$ via $\phi$.
\end{rem}

\begin{prop}\mlabel{prop:generalfockrep}
Let $\gamma$ be a strongly continuous  unitary one-parameter group on the complex pre-Hilbert space $(V,\beta)$. Then the Fock-Schr\"odinger representation $\pi:\Heis(V,\beta)\rightarrow\UU(\cH_K)$ extends to a continuous unitary representation of the topological group $\Heis(V,\beta)\rtimes_\gamma\R$ defined by 
$$\pi_\gamma:\Heis(V,\beta)\rtimes_\gamma\R\rightarrow\UU(\cH_K),\quad \pi_\gamma(t,v,s)f=\pi(t,v)(f\circ\gamma(s)^{-1}).$$ 
If $A:=i\gamma'(0)\geq0$ then $-i\dd\pi_\gamma(0,0,1)\geq0$, where $\dd\pi_\gamma(0,0,1)$ denotes the skew-adjoint generator of $s \mapsto \pi_\gamma(0,0,s)$.
\end{prop}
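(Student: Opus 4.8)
The plan is to handle the two assertions in turn: that $\pi_\gamma$ is a well-defined strongly continuous unitary representation, and then the positive-energy inequality.

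First I would set $U_\gamma(s)f := f\circ\gamma(s)^{-1}$, so that $\pi_\gamma(t,v,s)=\pi(t,v)U_\gamma(s)$ and $\pi_\gamma(0,0,s)=U_\gamma(s)$. Since $\gamma(s)$ is $\beta$-unitary it preserves the kernel, $K(\gamma(s)z,\gamma(s)w)=e^{\beta(\gamma(s)z,\gamma(s)w)}=e^{\beta(z,w)}=K(z,w)$, and the standard fact that a kernel-preserving bijection induces a unitary on $\cH_K$ (explicitly $U_\gamma(s)K_w=K_{\gamma(s)w}$) shows that $U_\gamma$ is a one-parameter group of unitaries. Its strong continuity follows from that of $\gamma$ via $\|K_{\gamma(s)w}-K_w\|^2=K(\gamma(s)w,\gamma(s)w)-K(w,\gamma(s)w)-K(\gamma(s)w,w)+K(w,w)\to0$ and the totality of the $K_w$. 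The crucial ingredient is the covariance relation $U_\gamma(s)\pi(t,v)U_\gamma(s)^{-1}=\pi(t,\gamma(s)v)$, which I would verify by a direct computation on functions in $\cH_K$, using only that $\gamma(s)$ preserves $\beta$. Inserting it into $\pi_\gamma(t,v,s)\pi_\gamma(t',v',s')=\pi(t,v)\big(U_\gamma(s)\pi(t',v')U_\gamma(s)^{-1}\big)U_\gamma(s+s')=\pi(t,v)\pi(t',\gamma(s)v')U_\gamma(s+s')$ and comparing with the multiplication law of $\Heis(V,\beta)\rtimes_\gamma\R$ (together with the homomorphism property of $\pi$ from Proposition~\ref{prop:fockrep}) gives that $\pi_\gamma$ is a homomorphism; joint strong continuity is then immediate from the strong continuity of $\pi$ and of $U_\gamma$.

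For the positive-energy statement, write $B:=-i\,\dd\pi_\gamma(0,0,1)$, a self-adjoint operator with $U_\gamma(s)=e^{isB}$; the goal is $B\geq0$. I would use the spectral characterization: if $P$ is the spectral measure of $B$, then $B\geq0$ iff $P(\,]-\infty,0[\,)=0$, and since $\langle P(\,]-\infty,0[\,)\xi,\xi\rangle=\|P(\,]-\infty,0[\,)\xi\|^2$ it suffices to show, for a total family of vectors $\xi$, that the positive measure $\mu_\xi:=\langle P(\cdot)\xi,\xi\rangle$ is supported in $[0,\infty)$. For a single vector this holds precisely when its Fourier transform $\langle U_\gamma(s)\xi,\xi\rangle=\int_\R e^{is\lambda}\,d\mu_\xi(\lambda)$ extends to a bounded holomorphic function on the open upper half plane $\{\Im s>0\}$. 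I would apply this to the coherent states $\xi=K_w$, which are total, and for which $\langle U_\gamma(s)K_w,K_w\rangle=\langle K_{\gamma(s)w},K_w\rangle=e^{\beta(w,\gamma(s)w)}$.

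The heart of the argument is then the analytic continuation of $s\mapsto\beta(w,\gamma(s)w)$. Since $A=i\gamma'(0)\geq0$ we have $\gamma(s)=e^{-isA}$ with $\spec(A)\subseteq[0,\infty)$, and the spectral theorem (with $\beta$ linear in the first and antilinear in the second argument, as in Proposition~\ref{prop:fockrep}) yields $\beta(w,\gamma(s)w)=\int_0^\infty e^{is\lambda}\,d\rho_w(\lambda)$, where $\rho_w=\|E(\cdot)w\|_\beta^2$ (with $E$ the spectral measure of $A$) is a positive measure on $[0,\infty)$ of total mass $\beta(w,w)$. For $\Im s\geq0$ and $\lambda\geq0$ we have $|e^{is\lambda}|=e^{-(\Im s)\lambda}\leq1$, so this integral defines a bounded holomorphic function on $\{\Im s>0\}$; hence $\langle U_\gamma(s)K_w,K_w\rangle=\exp\big(\int_0^\infty e^{is\lambda}\,d\rho_w\big)$ is holomorphic there and bounded by $e^{\beta(w,w)}$. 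By the criterion above and the totality of the $K_w$ this forces $B\geq0$, which is the assertion.

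The main obstacle I anticipate is not the formal calculation but keeping the argument valid for a possibly unbounded generator $A$ (no spectral hypothesis on $\gamma$ is assumed). Two points require care: identifying the correct self-adjoint generator $B$ of $U_\gamma$ rather than reasoning on the one-particle level — note that the precomposition $f\mapsto f\circ\gamma(s)^{-1}$ reverses the sign, so that $A\geq0$ is indeed what produces a positive $B$ — and justifying that the totality of coherent states reduces the operator inequality to the scalar estimates. The bound $|e^{is\lambda}|\leq1$ for $\Im s\geq0$ is precisely what absorbs the unboundedness of $A$, so that no growth condition on $\spec(A)$ is needed.
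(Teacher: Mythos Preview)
Your argument is correct, and it takes a genuinely different route from the paper's. For the first assertion both proofs agree (the paper simply says ``readily verified'', while you spell out the kernel invariance and the covariance relation). For the positivity statement the paper proceeds at the operator level: it passes to the completion $V_\beta$, extends $\gamma$ to an involutive holomorphic contraction semigroup $\hat\gamma$ on the closed upper half plane, pushes this forward to a holomorphic contraction semigroup $\alpha(z)f=f\circ\hat\gamma(z^*)$ on all of $\cH_K$ (checking the norm bound on the total set $K_w$), and then invokes an abstract result from \cite[Thm.~9.13(ii)]{HN93} to conclude $-i\dd\pi_\gamma(0,0,1)\geq 0$. Your approach instead stays at the scalar level: you test the spectral measure of $B$ against the total family of coherent states and use that bounded holomorphic extension of $s\mapsto\langle U_\gamma(s)K_w,K_w\rangle$ to the upper half plane forces $\supp\mu_{K_w}\subeq[0,\infty)$. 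This is more elementary and self-contained---no holomorphic semigroup machinery, no external reference---while the paper's route yields as a by-product the stronger statement that $U_\gamma$ extends to a contraction semigroup on $\cH_K$.

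Two small points you should make explicit when writing this up. First, you invoke the spectral theorem for $A$, but $(V,\beta)$ is only a pre-Hilbert space; you need to extend $\gamma$ to the completion $V_\beta$ (noting that $A\geq 0$ persists because $V$ is $\gamma$-invariant and dense) before the spectral measure $E$ of $A$ makes sense---this is exactly the step the paper carries out via Remark~\ref{rem:densekernel}. Second, the implication ``bounded holomorphic extension of $\hat\mu_\xi$ to the upper half plane $\Rightarrow$ $\supp\mu_\xi\subeq[0,\infty)$'' is the substantive direction of your Paley--Wiener criterion; it is standard, but since it carries the weight of the argument you should either cite it or sketch why it holds (e.g.\ split off $\mu_\xi\!\mid_{(-\infty,0)}$, which has a bounded holomorphic Fourier transform on the \emph{lower} half plane, glue across $\R$ by Morera to get a bounded entire function, and use Liouville).
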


\begin{proof} Although this is more or less well-known, for the sake of 
completeness, we provide a proof. 
As the kernel $K$ is invariant under $\gamma$, the operators $\pi_\gamma(0,0,s)$ are unitary and it is readily verified that $\pi_\gamma$ defines a representation. Now suppose that $A\geq0$. Let $V_\beta$ denote the completion of $(V,\beta)$. Note that $i\gamma'(0)\geq0$ on $V$ implies $i\gamma'(0)\geq0$ on $V_\beta$ since $V$ is $\gamma(\R)$-invariant. 

In order to prove the second assertion, in view of Remark~\ref{rem:densekernel}, 
we may assume that $V=V_\beta$. 
We consider the complex semigroup $\C^+=\R+i\R_{\geq0}$ with involution $z^*:=-\bar{z}$ for $z\in\C^+$. Then $\hat{\gamma}(z):=e^{iAz^*}, z \in \C^+$ defines an extension of $\gamma$ to a continuous involutive semigroup representation $\hat{\gamma}:\C^+ \rightarrow B(V)$ by contractions which is anti-holomorphic on $\text{int}(\C^+)$. For $f\in\cH_K$ and $z\in\C^+$ we define $\alpha(z)f=f\circ\hat{\gamma}(z^*)$. Then $\alpha(t)=\pi_\gamma(0,0,t)$ for $t\in \R$. Since $\alpha(z)K_w=K_{\hat{\gamma}(z)w}$ holds, the linear operators $\alpha(z)$ leave the dense subset $\cH_K^0:=\Spann \{ K_v : v\in V\}$ invariant and define an involutive semigroup representation $\alpha:\C^+\rightarrow \End(\cH_K^0)$. By \cite[Thm.~II.4.4]{Ne00} we obtain
$$\|(\alpha(z)\vert_{\cH_K^0}\|^2= \sup_{v\in V}e^{\beta(\hat{\gamma}(z^*)v,\hat{\gamma}(z^*)v)-\beta(v,v)}\leq1.$$
With \cite[Prop.~II.4.9]{Ne00} we conclude that $\alpha(z)$ is a contraction with  $\alpha(z)(\cH_K)\subset\cH_K$. This leads to a continuous involutive semigroup representation $\alpha:\C^+\rightarrow B(\cH_K)$ by contractions. Since $z\mapsto \langle \alpha(z)K_v,K_w \rangle=e^{\beta( w,\hat{\gamma}(z)v)}$ is holomorphic on $\text{int}(\C^+)$ for all $v,w\in V$, 
the representation $\alpha$ is also holomorphic on $\text{int}(\C^+)$ by 
\cite[Lem.~IV.2.2]{Ne00}. 
We conclude with \cite[Thm.~9.13(ii)]{HN93} that $-i\dd\pi_\gamma(0,0,1)$ is non-negative.
\end{proof}

\subsection{Application to generalized oscillator groups} 

Let $G := G(V,\omega,\gamma)$ be a generalized oscillator group. 

\begin{defn} We call a generalized oscillator group 
$G(V,\omega,\gamma)$ {\it standard} if there exists a 
unitary representation $(\rho, \hat V)$ of $\R$ such that 
\begin{description}
\item[\rm(a)] $V \cong \hat V^\infty$ is the space of smooth 
vectors for $(\rho, \hat V)$, endowed with its natural Fr\'echet topology. 
\item[\rm(b)] $\gamma(t) = \rho(t)\res_V$ for $t \in \R$. 
\item[\rm(c)] $\omega(v,w) = -2\Re \la \rho'(0)v,w\ra$. 
\item[\rm(d)] $i \rho'(0) \geq 0$. 
\end{description}
\end{defn}

\begin{rem}\mlabel{rem:stdoscrmk} 
(a) Note that for every standard generalized oscillator 
group $G = G(V,\omega,\gamma)$, the operator 
$D = \gamma'(0)$ on $V$ is essentially skew-adjoint 
and satisfies $\rho'(0) = \oline{D}$. 

(b) For the non-negative selfadjoint operator $A := i \rho'(0)$ 
on $\hat V$ we have 
$$ \omega(v,w) = 2\Re \la i A v,w\ra 
= - 2\Im \la Av,w\ra. $$

(c) Since $\omega$ is non-degenerate on $V$ the operator $A\vert_{\hat V^\infty}$ is injective. As $\hat V^\infty = \bigcap_{n = 1}^\infty \cD(A^n)$ contains every vector of $\ker  A \subset \hat V$ we obtain $\ker A=\{0\}$. Thus the image $D(V)$ is a dense subspace of $V$ in the Hilbert norm (c.f. Lemma~\ref{lem:6.2}) and hence by Lemma~\ref{lem:6.1}(b) also with respect to the $C^\infty$-topology.
\end{rem}

\begin{lem}\mlabel{lem:semiboundedcond}
Let $G=G(V,\omega,\gamma)$ be a generalized oscillator group with a non-trivial semi-equicontinuous coadjoint orbit and suppose that 
$D=\gamma'(0)$ has dense range. 
Let $\pi:G(V,\omega,\gamma) \rightarrow \UU(\cH)$ be a smooth representation with $\pi(t,0,0)=e^{it}$ and $-i\dd\pi(0,0,1)$ bounded from below. 
Then $\pi$ is semibounded.
\end{lem}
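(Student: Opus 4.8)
The plan is to compute the spectral bound $s_\pi$ directly on the Lie algebra and show it is finite and locally bounded near a point $(0,0,t_0)$ with $t_0>0$. Write $N:=-i\dd\pi(0,0,1)$, so by hypothesis $N\geq c$ for some $c\in\R$, and $P_v:=i\dd\pi(0,v,0)$, which is self-adjoint on $\cH^\infty$. Since $\pi(t,0,0)=e^{it}$ gives $\dd\pi(1,0,0)=i\,\id$, the centre acts by scalars and $i\dd\pi(z,v,t)=-z\,\id+P_v-tN$. Hence $s_\pi(z,v,t)=-z+\sup\Spec(P_v-tN)$, and it suffices to bound $\sup\Spec(P_v-tN)$ from above, locally uniformly in $(v,t)$ for $t>0$. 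First I would use Remark~\ref{rem:switchd} to normalise signs so that the conditions (C1)--(C3) of Theorem~\ref{thm:char} hold; in particular $Q(x,y)=\omega(Dx,y)$ is positive definite and, by Remark~\ref{rem:3.8}, $\|v\|_\kappa^2=\|i_v\omega\|_Q^2$ defines a continuous $\gamma$-invariant Hilbert seminorm on $V$.

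The heart of the matter is the \emph{$N$-bound}
\[ P_v-tN\ \leq\ \tfrac{1}{2t}\|v\|_\kappa^2\,\id-tc\qquad(t>0),\]
as quadratic forms on $\cH^\infty$. Granting it, continuity of $v\mapsto\|v\|_\kappa$ makes the right-hand side bounded on a neighbourhood of any $(0,0,t_0)$ with $t_0>0$, so $s_\pi$ is bounded there and $\pi$ is semibounded. Note that this bound is exactly $-\inf\langle\cO_\lambda,(z,v,t)\rangle$ for the functional $\lambda=(1,0,0)$ computed in \eqref{eq:acoad3}; this is the consistency check that the operator estimate reproduces the support function of the semi-equicontinuous orbit.

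To derive the $N$-bound I would use the commutation relations read off from the bracket, namely $[P_v,P_w]=-i\,\omega(v,w)\,\id$ and $[N,P_v]=-i\,P_{Dv}$, together with the $\kappa$-orthogonal complex structure $I$ with $A_\R=-I\oline D\geq0$ furnished by Proposition~\ref{prop:propcomplstr} and Remark~\ref{rem:complex}; here the hypothesis that $D$ has dense range enters via Lemma~\ref{lem:6.2}, guaranteeing that $\oline D$, hence $A_\R$, is injective, so that $I$ exists and the spectrum of $A_\R$ stays away from $0$. Passing to the creation/annihilation combinations $b_v:=P_v+iP_{Iv}$, the relation $[N,P_v]=-iP_{Dv}$ says that $N$ generates the flow $\gamma$ on the $P$'s, so that $N$ differs from the quantised oscillator Hamiltonian $N_0$ (a positive quadratic expression in the $P_v$ with $N_0\geq0$) by an operator commuting with $\pi|_{\Heis(V,\omega)}$, and positive energy then forces that central part to be $\geq c$. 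Completing the square mode by mode, $-\tfrac{t\mu}{\sigma}(b_v-\beta)(b_v-\beta)^*\leq0$ with $\beta$ chosen to cancel the linear term, and integrating over the spectrum of $A_\R$, yields precisely the displayed inequality.

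The step I expect to be the main obstacle is the passage from this formal, mode-by-mode computation to a genuine operator inequality on $\cH^\infty$. In infinite dimensions Stone--von Neumann uniqueness fails, so $\pi|_{\Heis(V,\omega)}$ need not be a multiple of the Fock representation and $N$ need not literally split as $N_0+C$; moreover $D$ may have purely continuous spectrum, so there are no discrete modes to sum over. The robust way around this is to avoid the decomposition entirely and argue as in Proposition~\ref{prop:generalfockrep}: the energy group $s\mapsto\pi(0,0,s)=e^{isN}$ extends, because $N\geq c$, to a holomorphic semigroup $e^{izN}$ on $\{\Im z\geq0\}$ that is bounded on strips, and $e^{isN}P_ve^{-isN}=P_{\gamma(s)v}$ continues to $P_{\gamma(z)v}$ with $\gamma(z)=e^{zD}$ a contraction on the positive-frequency subspace (by $A_\R\geq0$). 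A Phragm\'en--Lindel\"of/maximum-principle estimate for the holomorphic $\cH$-valued functions $z\mapsto e^{izN}\psi$ then produces the $N$-bound uniformly on $\cH^\infty$, which is the one analytic input the argument really needs.
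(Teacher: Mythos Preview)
Your target inequality---the ``$N$-bound'' $P_v-tN\leq\frac{1}{2t}\|v\|_\kappa^2-tc$---is exactly the right conclusion, but you are taking a hard analytic route to a fact that follows in two lines from a structural observation you have not used: the support functional $s_\pi$ is $\Ad$-invariant. Indeed $s_\pi(x)=\sup\langle I_\pi,-x\rangle$ and the momentum set $I_\pi$ is $\Ad^*(G)$-invariant, so $s_\pi\circ\Ad(g)=s_\pi$ for every $g\in G$. The paper simply computes
\[
\Ad(0,x',0)(0,0,1)=\Big(\tfrac{1}{2}\kappa(Dx',Dx'),\,-Dx',\,1\Big),
\]
so $s_\pi\big(\tfrac{1}{2}\kappa(Dx',Dx'),-Dx',1\big)=s_\pi(0,0,1)\leq c$ for all $x'$. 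Because $s_\pi$ is a supremum of continuous linear functionals it is lower semicontinuous; density of $D(V)$ in $V$ (this is where that hypothesis enters, not via Lemma~\ref{lem:6.2}) then gives $s_\pi\big(\tfrac{1}{2}\kappa(x,x),x,1\big)\leq c$ for every $x\in V$, and $\pi(t,0,0)=e^{it}$ yields $s_\pi(z,x,1)\leq c-z+\tfrac{1}{2}\kappa(x,x)$. This is your $N$-bound at $t=1$, obtained without any operator calculus.

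By contrast, your proposed derivation has a genuine gap. The decomposition $N=N_0+C$ with $N_0\geq 0$ quadratic in the $P_v$'s and $C$ commuting with $\pi\vert_{\Heis(V,\omega)}$ is a Fock-representation feature; for a general smooth representation satisfying only the positive-energy condition there is no reason it should hold, and you yourself note that Stone--von Neumann fails here. The fallback Phragm\'en--Lindel\"of sketch is too vague: extending $e^{isN}P_ve^{-isN}=P_{\gamma(s)v}$ holomorphically requires controlling unbounded operators $P_{\gamma(z)v}$ on a common dense domain, and it is unclear what boundary estimate would produce the precise quadratic bound $\tfrac{1}{2t}\|v\|_\kappa^2$. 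The $\Ad$-invariance argument bypasses all of this: it converts the single spectral bound at $(0,0,1)$ into the full inequality by group conjugation, with the only analytic input being lower semicontinuity of $s_\pi$.
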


\begin{proof}
By Remark~\ref{rem:4.3} we may assume that there exists a continuous real scalar product $\kappa$ on $V$ such that $\omega(x,y)=-\kappa(Dx,y)$. Consider the support functional 
$$s_\pi(a)= \sup_{v \in \cH^{\infty}, \|v\|=1} \langle i\dd\pi(a)v, v \rangle \quad \text{ for $a\in\g=\g(V,\omega,D)$.}$$ 
By assumption we have $s_\pi(0,0,1)\leq c < \infty$. Note that 
$$\Ad(0,x',0)(0,0,1)=\Big(\frac{1}{2}\kappa(Dx',Dx'),-Dx',1\Big).$$
Next we note that $s_\pi$ is $\Ad$-invariant and 
a supremum of continuous linear functions. 
From the density of $D(V)$ in $V$, we thus 
obtain $s_\pi\big(\frac{1}{2}\kappa(x,x),x,1\big)\leq c$ for all $x\in V$. We conclude $s_\pi(t,x,1) \leq c-t+\frac{1}{2}\kappa(x,x)$ because of $\pi(t,0,0)=e^{it}$. It follows that $s_\pi$ is bounded on a non-empty open subset of $\g$, i.e., $\pi$ is semibounded.
\end{proof}

\begin{prop}\mlabel{prop:semibfockrep}
Let $G=G(V,\omega,\gamma)$ be a standard generalized oscillator group, $A=i\gamma'(0)$ and $\cH_K \subeq \C^V$ be the Hilbert space with reproducing kernel 
$K(z,w) = e^{\la Az,w\ra}$. Then we obtain by 
\begin{align*}
& (\pi(t,v,s)f)(z) 
= e^{it - \frac{1}{2}\la Av,v\ra + \la Az, v\ra} f(\gamma(s)^{-1}(z-v))
\end{align*}
a semibounded unitary representation of $G$ on $\cH_K$.
\end{prop}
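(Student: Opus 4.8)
The plan is to recognize the operators in the statement as the extended Fock--Schr\"odinger representation $\pi_\gamma$ of Proposition~\ref{prop:generalfockrep} and then to verify the standing hypotheses of Lemma~\ref{lem:semiboundedcond}. First I set $\beta(v,w) := \la Av,w\ra$. Since $G$ is standard, $A = i\gamma'(0)$ is non-negative and, because $\omega$ is non-degenerate on $V$, injective (Remark~\ref{rem:stdoscrmk}(c)); hence $\beta$ is a positive definite hermitian form, $(V,\beta)$ is a complex pre-Hilbert space, and its kernel $e^{\beta(z,w)} = e^{\la Az,w\ra}$ is exactly $K$. By Remark~\ref{rem:stdoscrmk}(b) one has $\omega = -2\Im\beta$, so $\Heis(V,\beta) = \Heis(V,\omega)$ and $\Heis(V,\beta)\rtimes_\gamma\R = G$. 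Since $\gamma(t) = \rho(t)\res_V$ commutes with $A = i\rho'(0)$ and $\rho$ is unitary, each $\gamma(t)$ is $\beta$-unitary, and the $\beta$-norm is dominated by the Fr\'echet seminorms of $V = \hat V^\infty$, so $\gamma$ is strongly $\beta$-continuous. Proposition~\ref{prop:generalfockrep} then applies directly: the stated operators are precisely $\pi_\gamma(t,v,s)f = \pi(t,v)(f\circ\gamma(s)^{-1})$, they define a continuous unitary representation $\pi$ of $G$ on $\cH_K$, and since $A\geq 0$ we get $-i\dd\pi(0,0,1)\geq 0$, in particular bounded from below. Putting $v=0,s=0$ in the formula gives $\pi(t,0,0) = e^{it}\id$.

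Next I establish that $\pi$ is smooth. A direct substitution shows that the cyclic unit vector $\Omega := K_0 = 1$ has orbit map $(t,v,s)\mapsto \pi(t,v,s)\Omega = e^{it - \frac12\la Av,v\ra}K_v$, which is independent of $s$ and depends smoothly on $(t,v)$: indeed $v\mapsto \la Av,v\ra$ is a continuous quadratic form, and $v\mapsto K_v\in\cH_K$ is smooth because its kernel $\la K_v,K_w\ra = e^{\la Aw,v\ra}$ depends smoothly on $v$, using the continuity of $A$ on the Fr\'echet space $V$. Hence $\Omega$ is a smooth vector. As the space of smooth vectors is invariant under $\pi(G)$ (right translations on the Lie group $G$ are smooth), every translate $\pi(0,v,0)\Omega = e^{-\frac12\la Av,v\ra}K_v$, and thus every $K_v$, is smooth. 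Since $\Spann\{K_v : v\in V\}$ is dense in $\cH_K$, the representation $\pi$ is smooth.

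It remains to check the two structural hypotheses of Lemma~\ref{lem:semiboundedcond}. The operator $D = \gamma'(0)$ has dense range by Remark~\ref{rem:stdoscrmk}(c). For the existence of a non-trivial semi-equicontinuous coadjoint orbit I verify conditions (C1)--(C3) of Theorem~\ref{thm:char} for the real symplectic space $(V,\omega)$: with $D = -iA$ on $V$ one computes $Q(x,x) = \omega(Dx,x) = 2\|Ax\|^2$, which is positive definite since $\ker A = \{0\}$, giving (C1); the estimate $|\omega(x,y)| = 2|\Im\la Ax,y\ra|\leq \sqrt2\,\|x\|\,\|y\|_Q$ shows $\|i_x\omega\|_Q\leq \sqrt2\,\|x\|$, which yields (C2) and the continuity of $\eta$ in (C3). (Alternatively, taking $\kappa := 2\Re\la\cdot\,,\cdot\ra$ one recognizes $\g(V,\omega,D)$ as a double extension $\g(V,\kappa,D)$ and invokes Theorem~\ref{thm:4.2}.) Thus $G$ carries a non-trivial semi-equicontinuous coadjoint orbit, all hypotheses of Lemma~\ref{lem:semiboundedcond} are met, and the lemma gives that $\pi$ is semibounded.

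The step I expect to require the most care is the smoothness argument, concretely the proof that $v\mapsto K_v$ is a smooth map from the Fr\'echet space $V = \hat V^\infty$ into the Hilbert space $\cH_K$. Everything else is either a direct substitution into the cited results or the short verification of (C1)--(C3); once smoothness of the single cyclic vector $\Omega$ is secured, invariance of the smooth vectors under $\pi(G)$ together with the density of the coherent states propagates smoothness to a dense subspace at no additional cost.
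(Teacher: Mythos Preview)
Your proof is correct and follows essentially the same route as the paper: identify $\pi$ with the extended Fock representation of Proposition~\ref{prop:generalfockrep}, observe that a standard oscillator algebra is a double extension so Theorem~\ref{thm:4.2} and Remark~\ref{rem:stdoscrmk}(c) supply the hypotheses of Lemma~\ref{lem:semiboundedcond}, and conclude. The only point where the paper is more economical is smoothness: rather than arguing directly that $v\mapsto K_v$ is smooth (which, as you rightly flag, needs a result to pass from smoothness of the kernel to smoothness of the Hilbert-space-valued map), the paper simply observes that the matrix coefficient $\langle \pi(t,v,s)1,1\rangle = e^{it-\frac12\langle Av,v\rangle}$ is smooth and invokes \cite[Cor.~7.3]{Ne10b}, which states that a continuous unitary representation with a cyclic vector whose diagonal matrix coefficient is smooth is automatically smooth---this is exactly the lemma your hands-on argument is implicitly reproving.
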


\begin{proof}
Since $G$ is standard, its Lie algebra $\g$ is isomorphic to a double extension and therefore contains a non-trivial semi-equicontinuous coadjoint orbit. The representation $\pi$ is smooth since the 
function $\langle \pi(t,v,s) 1,1 \rangle=e^{it-\frac{1}{2}\langle Av,v \rangle}$ 
is smooth and $1$ is a cyclic vector (\cite[Cor. 7.3]{Ne10b}).  With Proposition~\ref{prop:generalfockrep} and Remark~\ref{rem:stdoscrmk}(c) we obtain that the assumptions of Lemma~\ref{lem:semiboundedcond} are satisfied. Hence $\pi$ is semibounded.
\end{proof}

\begin{thm} For a generalized oscillator group 
$G = G(V,\omega,\gamma)$ with $D:=\gamma'(0)$ 
and $V\not=\{0\}$ the following are equivalent:
\begin{description}
\item[\rm(a)] $G(V,\omega,\gamma)$ has a semibounded unitary 
representation $(\pi, \cH)$ with $\pi(t,0,0) = e^{it}\1$ for $t \in \R$. 
\item[\rm(b)] $G(V,\omega,\gamma)$ has a semibounded unitary 
representation $(\pi, \cH)$ with $[\g,\g] \not\subeq \ker \dd\pi$. 
\item[\rm(c)] $\g(V,\omega,D)$ has a non-trivial semi-equicontinuous orbit.
\item[\rm(d)] There exists a standard generalized oscillator group 
$\hat G := G(\hat V,\hat\omega,\hat\gamma)$ 
and a dense embedding $\iota \: V \to \hat V$, such that 
$$ \iota_G \: G(V,\omega,\gamma) \to G(\hat V,\hat\omega,\hat\gamma), 
\quad (z,v,t) \mapsto  (z,\iota(v),t), $$
possibly after replacing $\gamma$ by $\gamma^{-1}$, is a morphism of Lie groups. 
\end{description}
\end{thm}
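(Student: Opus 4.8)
The plan is to prove the equivalence by establishing a cycle of implications, using the results accumulated in the earlier sections to connect the representation-theoretic conditions (a), (b), and (d) to the purely geometric condition (c). The central bridge is the momentum set: for a smooth unitary representation $(\pi,\cH)$, semiboundedness of $\pi$ is equivalent to semi-equicontinuity of the momentum set $I_\pi$, and the coadjoint orbits contained in $I_\pi$ are precisely those visible to the derived representation.

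First I would prove (a)$\Rarrow$(b). If $\pi(t,0,0) = e^{it}\1$, then $\dd\pi(1,0,0) = i\1 \neq 0$, so the central element $(1,0,0) \in [\g,\g]$ is not in $\ker\dd\pi$; this is immediate from the bracket formula $[(0,v,0),(0,v',0)] = (\omega(v,v'),0,0)$ together with nondegeneracy of $\omega$. Next, for (b)$\Rarrow$(c), I would use the momentum set: since $(\pi,\cH)$ is semibounded, $I_\pi$ is semi-equicontinuous, and since $[\g,\g]\not\subeq\ker\dd\pi = I_\pi^\bot$, the set $I_\pi$ cannot be contained in $[\g,\g]^\bot$. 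Hence $I_\pi$ contains a functional $\lambda$ with $\lambda|_{[\g,\g]}\neq 0$, i.e.\ a nontrivial functional whose coadjoint orbit $\cO_\lambda \subeq I_\pi$ is therefore also semi-equicontinuous (as a subset of a semi-equicontinuous set) and nontrivial by Remark~\ref{rem:3.4}. This yields condition (c).

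The main work is (c)$\Rarrow$(d). Assuming a nontrivial semi-equicontinuous orbit, Theorem~\ref{thm:char} gives (C1)--(C3) after possibly replacing $D$ by $-D$, and Theorem~\ref{thm:4.2} identifies $\g(V,\omega,D)$ with a double extension $\g(V,\kappa,D)$ for the scalar product $\kappa(x,y) = Q(\eta(x),\eta(y))$. By Remark~\ref{rem:4.3}, $D(V)$ is dense in $V_\kappa$, so I would invoke Remark~\ref{rem:complex} and Proposition~\ref{prop:propcomplstr} to equip $\cH := V_\kappa$ with a complex Hilbert space structure in which $\gamma$ extends to a unitary one-parameter group with $A = -i\oline D = A_\R \geq 0$. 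I then set $\hat V := \cD^\infty(A)$ with its Fréchet topology and $\hat\gamma := $ the restriction of the extended $\gamma$; by Remark~\ref{rem:stdoscrmk} and the definition of \emph{standard}, $\hat G := G(\hat V,\hat\omega,\hat\gamma)$ is a standard generalized oscillator group. The embedding $\iota \: V \to \hat V$ is the inclusion $V \into V_\kappa$, which lands in $\cD^\infty(\oline D)$ because $\eta(V)\subeq \cD^\infty(\oline D)$ and $V \subeq \eta(V)$ by \eqref{eq:eta-d-rel}; its density in $\hat V$ with respect to the $C^\infty$-topology follows from Lemma~\ref{lem:6.1}(b). That $\iota_G$ is a morphism of Lie groups reduces to checking $\hat\omega(\iota(v),\iota(w)) = \omega(v,w)$, which holds by construction of $\kappa$ and the relation $\omega(x,y) = -\kappa(Dx,y)$.

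Finally, for (d)$\Rarrow$(a) I would use Proposition~\ref{prop:semibfockrep}: the standard group $\hat G$ carries the semibounded Fock--Schrödinger representation $\hat\pi$ with $\hat\pi(t,0,0) = e^{it}\1$. Pulling back along the morphism $\iota_G$ gives a representation $\pi := \hat\pi \circ \iota_G$ of $G$ satisfying $\pi(t,0,0) = e^{it}\1$; semiboundedness is inherited because $s_\pi = s_{\hat\pi}\circ \dd(\iota_G)$ is a restriction of the semi-equicontinuous support functional to the subalgebra $\dd(\iota_G)(\g)$, and restriction preserves local boundedness from above. The smoothness of $\pi$ follows since the continuous embedding $\iota$ sends smooth vectors to smooth vectors. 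The step I expect to be the genuine obstacle is (c)$\Rarrow$(d): one must verify carefully that the abstractly constructed complex Hilbert completion indeed produces a \emph{standard} group with $V$ sitting densely inside $\cD^\infty(A)$ in the correct topology, and that the sign conventions linking $\omega$, $\kappa$, and the positivity $A \geq 0$ are all consistent after the possible replacement of $D$ by $-D$.
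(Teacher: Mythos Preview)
Your proposal is correct and follows essentially the same route as the paper: the same cycle (a)$\Rightarrow$(b)$\Rightarrow$(c)$\Rightarrow$(d)$\Rightarrow$(a), with (b)$\Rightarrow$(c) via the momentum set, (c)$\Rightarrow$(d) via Theorem~\ref{thm:4.2}, Remark~\ref{rem:4.3}, Proposition~\ref{prop:propcomplstr}, and Lemma~\ref{lem:6.1}(b), and (d)$\Rightarrow$(a) by pulling back the Fock representation from Proposition~\ref{prop:semibfockrep}. The only point where you should tighten the wording is in (d)$\Rightarrow$(a): ``restriction preserves local boundedness'' needs the density of $\dd(\iota_G)(\g)$ in $\hat\g$ (which you have, since $\iota(V)$ is dense in $\hat V$) to guarantee that the open set on which $s_{\hat\pi}$ is bounded actually meets the image, and then continuity of $\dd(\iota_G)$ to pull this back to an open set in~$\g$.
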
 

\begin{prf}
(a)$\Rightarrow$(b): follows from $(1,0,0) \in [\g,\g]$. 

(b)$\Rightarrow$(c): Our assumption implies that the momentum set 
$I_\pi$ of $\pi$ is not contained in $[\g,\g]^\bot$. 
Since every coadjoint orbit in $I_\pi$ is semi-equicontinuous, 
(c) follows. 

(c)$\Rightarrow$(d) If $\g(V,\omega,D)$ has a non-trivial semi-equicontinuous 
coadjoint orbit, then Theorem~\ref{thm:4.2} applies, i.e.,  
$\g(V,\omega,D)$ is isomorphic to a double extension $\g(V,\kappa,D)$, 
where $\omega(x,y)=-\kappa(Dx,y)$ (Remark~\ref{rem:4.3}). 
Let $\hat V \subeq V_{\kappa}$ denote the space of smooth 
vectors for the isometric one-parameter group $\hat\gamma$, 
obtained by extending the operators $\gamma(t)$ to isometries 
on $V_{\kappa}$. Note that  $V\subset \hat V$ since $\gamma$ acts smoothly 
on $V$. We endow $\hat V$ with its natural Fr\'echet topology 
defined by the seminorms $p_n(v) := \|D^{n} v\|$, $n \in \N_0$. 
Then 
$$ \hat\omega(v,w) := -\kappa(Dv,w)  $$
is a continuous alternating form on $\hat V$ extending $\omega$, and 
since $DV$ is dense in $V_{\kappa}$ (Lemma~\ref{lem:4.9}), $\hat\omega$ is non-degenerate.  The action of $\hat \gamma$ on $\hat V$ is smooth (c.f. \cite[Thm.~4.4]{Ne10b}) and thus we obtain a generalized oscillator group $G(\hat V, \hat\omega, \hat\gamma)$. Applying Proposition~\ref{prop:propcomplstr} (with $-D$ instead of $D$) yields on $\hat V$ an $\hat \omega$-negative complex structure $I$ commuting with $\gamma(\R)$. Since $I$ is $\kappa$-isometric it may be extended to $V_\kappa$ and by setting
$$\langle v,w\rangle := \frac{1}{2}(\kappa(v,w)-i\kappa(Iv,w)) \quad \mbox{ for } \quad v,w \in 
V_\kappa,$$
we turn $V_\kappa$ into a complex Hilbert space. Then $\hat \gamma$ becomes a unitary one-parameter group on $V_\kappa$.  Furthermore 
$$\hat{\omega}(v,w)= -2\Re\langle Dv,w\rangle  \quad \mbox{ for } 
\quad v,w \in \hat V, $$
and
$$ 0 \geq \hat\omega(Iv,v) = - \kappa(DIv,v) \quad \mbox{ for } \quad v \in \hat V $$
implies that $ID = DI \geq 0$. Thus $G(\hat V, \hat\omega, \hat\gamma)$ is standard. The inclusion $\iota:V\subset \hat V$ is dense by Lemma \ref{lem:6.1}(b) and by construction $\iota_G$ is a Lie group morphism. 

(d)$\Rightarrow$(a) Since we always have the isomorphism 
$$G(V,\omega,\gamma)\cong G(V,\omega,\gamma^{-1}),(z,t,s)\rightarrow (z,t,-s),$$ we may assume that $\iota_G$ is a morphism of Lie groups. The Fock representation $\pi_F$ of $G(\hat V, \hat\omega, \hat\gamma)$ is semibounded 
by  Proposition~\ref{prop:semibfockrep}, so that 
we obtain the semibounded representation $\iota_G^*\pi_F$ 
of $G(V,\omega,\gamma)$. 
\end{prf}

\end{document}